\documentclass[a4paper,11pt,twoside]{article}
\RequirePackage[a4paper,head=1cm]{geometry}
\usepackage[utf8x]{inputenc}
\usepackage{textcomp}
\usepackage{amsmath,amsfonts,verbatim,afterpage,euscript,mathrsfs,amssymb,empheq}
\usepackage{amsthm}
\usepackage{dsfont}
\usepackage{hyperref}
\usepackage{pst-fill,pst-grad}
\usepackage{textcomp}
\usepackage{multicol}
\usepackage{amsmath}
\usepackage{amssymb}
\usepackage{hyperref}
\usepackage{dsfont}

\newtheorem{Theoreme}{Theorem}
\newtheorem{Lemme}{Lemma}[section]
\newtheorem{Corollaire}{Corollary}[section]

\newcommand{\mysection}{\setcounter{equation}{0} \section}

\def \vu{\vec{u}}
\def \vv{\vec{v}}
\def \vf{\vec{f}}
\def \vn{\vec{\nabla}}
\title{\bf Some results for a stationary Navier-Stokes equation with a rough drift in a weighted functional framework.} 
\author{Diego Chamorro\footnote{Laboratoire de Math\'ematiques et Mod\'elisation d'Evry (LaMME) - UMR 8071. Universit\'e d'Evry Val d'Essonne, 23 Boulevard de France, 91037 Evry Cedex, France. email: \textit{diego.chamorro@univ-evry.fr}}, Anca-Nicoleta Marcoci\footnote{Department of Mathematics and Computer Science. Technical University of Civil Engineering, Bucharest, Bld. Lacul Tei, no. 124, sector 2. Romania. email: \textit{anca.marcoci@utcb.ro}}, Liviu-Gabriel Marcoci\footnote{Department of Mathematics and Computer Science. Technical University of Civil Engineering, Bucharest, Bld. Lacul Tei, no. 124, sector 2. Romania.  email: \textit{liviu.marcoci@utcb.ro}}.} 
\begin{document} 
%%%%%%%%%%%%%%%%%%%%%%%%%%%%%%%%%%%%%%%%%%%%%%%%%%%
\maketitle 
%%%%%%%%%%%%%%%%%%%%%%%%%%%%%%%%%%%%%%%%%%%%%%%%%%%
%%%%%%%%%%%%%%%%%%%%%%%%%%%%%%%%%%%%%%%%%%%%%%%%%%%
\begin{scriptsize}
\abstract{ \noindent In this article, we study some classes of solutions for a stationary Navier-Stokes equation where we consider a rough drift given by a singular integral operator which does not belong to the classical Calder\'on-Zygmund family of singular integral operators. Given a small external force, we will construct solutions to this system in the framework of weighted Morrey-Sobolev spaces. The use of Morrey-based Sobolev spaces provides a more general setting than the usual Lebesgue-based Sobolev spaces, and the presence of Muckenhoupt weights will allow us to present some existence and uniqueness results from several points of view.}\\

{\footnotesize
\noindent \textbf{Keywords:  stationary Navier-Stokes equation; rough singular integral operators; Muckenhoupt weights.} \\
\noindent \textbf{MSC (2020) Primary: 35A01; Secondary: 35B33}
}
\end{scriptsize}
%%%%%%%%%%%%%%%%%%%%%%%%%%%%%%%%%%%%%%%%%%%%%%%%%%%
%\tableofcontents 

%%%%%%%%%%%%%%%%%%%%%%%%%%%%%%%%%%%%%%%%%%%%%%%%%%%
%%%%%%%%%%%%%%%%%%%%%%%%%%%%%%%%%%%%%%%%%%%%%%%%%%%
%%%%%%%%%%%%%%%%%%%%%%%%%%%%%%%%%%%%%%%%%%%%%%%%%%%
\mysection{Introduction}
In this article, we study some problems related to the existence, regularity, and uniqueness of weighted solutions for the following stationary Navier-Stokes equation with a rough drift:
\begin{equation}\label{Equation_Intro}
\Delta \vu-(\mathcal{T}(\vu) \cdot \vn) \vu-\vn \pi+\vf=0,\qquad div(\vu)=0,\qquad x\in \mathbb{R}^n, n\geq 2,
\end{equation}
where $\vu:\mathbb{R}^n\longrightarrow \mathbb{R}^n$ is the velocity vector field, $\pi:\mathbb{R}^n\longrightarrow \mathbb{R} $ is the internal pressure of the fluid  and the vector $\vf:\mathbb{R}^n\longrightarrow \mathbb{R}^n$ is a given external force. Here, the quantity $\mathcal{T}(\vu)$ is a drift vector such that $\mathcal{T}(\vu)=(\mathcal{T}(u_1), \cdots, \mathcal{T}(u_n))$ where $\mathcal{T}$ is a rough singular integral operator defined by the expression 
\begin{equation}\label{Def_Operator}
\mathcal{T}(\phi)(x)=p.v.\int_{\mathbb{R}^n}\frac{\Omega_k(y/ |y|)}{|y|^n}\phi(x-y)dy,
\end{equation}
for any locally integrable function $\phi:\mathbb{R}^n\longrightarrow \mathbb{R}$, and where the kernels $\Omega_k:\mathbb{S}^{n-1}\longrightarrow \mathbb{R}$ are such that 
$\Omega_k \in L^1(\mathbb{S}^{n-1})$, $\displaystyle{\int_{\mathbb{S}^{n-1}}\Omega_k \ d\sigma=0}$ and $\Omega_k \in L^{\rho}(\mathbb{S}^{n-1})$ with $1<\rho<n$.\\ 

One important feature of this kind of operators $\mathcal{T}$ is that they fall outside the usual framework of singular integral operators of convolution type (see Section 4.4 of the book \cite{Grafakos}) and in this sense we will say that the vector $\mathcal{T}(\vu)$ is a \emph{rough} drift for the equation (\ref{Equation_Intro}). Some boundedness properties of this general family of rough singular operators were studied in \cite{Hoang1}, \cite{Hoang} and \cite{Li} (although with less general conditions over the kernel function $\Omega$ that the ones that will be used here) and in this article we will exploit some recent estimates obtained in our previous work \cite{ChMarcociMarcoci} that generalizes the previous inequalities available for this kind of operators.\\

We are particularly interested in the properties of the solutions of the problem (\ref{Equation_Intro}) that can be obtained in a weighted functional framework, where the weights considered here will belong to some Muckenhoupt class $A_p$ and, as we will see, the presence of some well-suited weights will shed some light on the properties of the solutions studied here. Note that the use of appropriate weights can be quite useful to deepen the study of the Navier-Stokes equations, see \emph{e.g.} \cite{Fernandez} or \cite{PGLR} for the use of weights in the analysis of the evolution of the Navier-Stokes problem. Some results in a weighted setting are available for the stationary system studied here, see for example \cite{Amrouche}, \cite{Duran}, \cite{Frehse}, \cite{Frehse1}, \cite{Otarola}, \cite{Schumacher} and the references therein. Note, however, that the study of the stationary Navier-Stokes system in the weighted framework seems to be, to the best of our knowledge, limited to bounded, periodic, or exterior domains. In this sense, our results are completely new since we consider the equations in the whole Euclidean space $\mathbb{R}^n$ and we add a rough singular integral operator in the drift term. \\

Let us mention that, although some weighted boundedness estimates are available for rough type operators (see in particular \cite{Watson}, which only deals with Lebesgue spaces), in the present work we will need another type of inequalities, stated in the more general setting of weighted Morrey spaces, that will allow us to consider simultaneously a weighted and an unweighted framework (see Lemma \ref{LemmaRough} below) and this mix of information will be crucial to establish our results on the properties of solutions to the equation (\ref{Equation_Intro}). \\

In order to present our results, we need to introduce some notations. Recall first that if a locally integrable function $\omega:\mathbb{R}^n\longrightarrow ]0,+\infty[$ is a weight then, for $1\leq p\leq q<+\infty$ and for a locally integrable function $\vec{\phi}:\mathbb{R}^n\longrightarrow \mathbb{R}^n$, the weighted Morrey spaces $\mathcal{M}^{p,q}(\omega)$ are given by the condition: 
\begin{equation}\label{Def_Morrey}
\|\vec{\phi}\|_{\mathcal{M}^{p,q}(\omega)}= \underset{r>0}{\sup}\underset{x\in \mathbb{R}^n}{\sup}\ \frac{1}{\omega(B(x,r))^{\frac{1}{p}-\frac{1}{q}}}\left(\int_{B(x,r)}|\vec{\phi}(y)|^p\omega(y)dy\right)^{\frac{1}{p}}<+\infty,
\end{equation}
where $\omega(B(x,r))=\displaystyle{\int_{B(x,r)}\omega(y)dy}$. Note that when $\omega\equiv 1$, we recover the usual Morrey spaces $\mathcal{M}^{p,q}$. Remark also that if $p=q$ then we recover the usual (weighted) Lebesgue spaces as we have the identification $L^q(\omega)=\mathcal{M}^{q,q}(\omega)$. Associated to these weighted Morrey spaces, for $s\in\mathbb{R}$, we can define the weighted Sobolev spaces $\dot{W}^{s}_{p,q}(\omega)$ by the condition 
\begin{equation}\label{Def_SobolevMorrey}
\|\vec{\phi}\|_{\dot{W}^{s}_{p,q}(\omega)}=\|(-\Delta)^{\frac{s}{2}}\vec{\phi}\|_{\mathcal{M}^{p,q}(\omega)}<+\infty,
\end{equation}
where the fractional power of the Laplacian $(-\Delta)^{\frac{s}{2}}$ can be defined in the Fourier level by its symbol $|\xi|^{s}$. As before, in the particular case when $\omega\equiv 1$, we will simply write 
$\dot{W}^{s}_{p,q}$ to denote the unweighted Sobolev space associated to the Morrey space $\mathcal{M}^{p,q}$. See more properties of these functional spaces in the Section \ref{Secc_Notations} below.\\

Our first result gives the global framework that we are going to study here: 
%%%%%%%%%%%%%%%%%%%%%%%%%%%%%%%%%%%%%%%%%%%%%%%%%%%
\begin{Theoreme}[\bf Existence in mixed spaces]\label{Theorem_Existence}
Over the Euclidean space $\mathbb{R}^n$ with $n\geq 3$, consider a Muckenhoupt weight $\omega$ that belongs to the class $A_{\frac{p}{2}}$ with $2<p< n$. Let $\vf:\mathbb{R}^n\longrightarrow \mathbb{R}^n$ be an external force such that $\vf \in \dot{W}^{-1}_{\frac{p}{2}, \frac{n}{2}}\cap \dot{W}^{-1}_{\frac{p}{2}, \frac{n}{2}}(\omega)$. If the quantity $\|\vf\|_{\dot{W}^{-1}_{\frac{p}{2}, \frac{n}{2}}}+\|\vf\|_{\dot{W}^{-1}_{\frac{p}{2}, \frac{n}{2}}(\omega)}$ is small enough, then there exists a solution $(\vu, \pi)$ of the stationary Navier-Stokes equations \eqref{Equation_Intro} such that we have $\vu\in \dot{W}^1_{\frac{p}{2},\frac{n}{2}}\cap \dot{W}^1_{\frac{p}{2},\frac{n}{2}}(\omega)$ and $\pi\in \mathcal{M}^{\frac{p}{2},\frac{n}{2}}\cap \mathcal{M}^{\frac{p}{2},\frac{n}{2}}(\omega)$.  
\end{Theoreme}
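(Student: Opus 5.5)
We will recast \eqref{Equation_Intro} as a fixed point problem and apply the Banach--Picard contraction principle in
$$E=\dot{W}^1_{\frac{p}{2},\frac{n}{2}}\cap \dot{W}^1_{\frac{p}{2},\frac{n}{2}}(\omega), \qquad \|\vu\|_E=\|\vu\|_{\dot{W}^1_{\frac{p}{2},\frac{n}{2}}}+\|\vu\|_{\dot{W}^1_{\frac{p}{2},\frac{n}{2}}(\omega)}.$$
Applying the Leray projector $\mathbb{P}$ and using $div(\vu)=0$, the equation becomes
$$\vu=\frac{1}{-\Delta}\mathbb{P}(\vf)-\frac{1}{-\Delta}\mathbb{P}\big((\mathcal{T}(\vu)\cdot\vn)\vu\big).$$
We write the nonlinear term as $B(\vu,\vv)=\frac{1}{-\Delta}\mathbb{P}\big((\mathcal{T}(\vu)\cdot\vn)\vv\big)$. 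The pressure is then recovered as $\pi=\sum_{i,j}\mathcal{R}_i\mathcal{R}_j(\ldots)$ plus a term involving $\vf$, via the usual formula obtained by taking the divergence of the equation.

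\textbf{Step 1: the linear term.} The operators $\mathbb{P}$ and $(-\Delta)^{1/2}\frac{1}{-\Delta}$ are built from Riesz transforms, i.e.\ from classical Calder\'on--Zygmund operators. These are bounded on $\mathcal{M}^{\frac p2,\frac n2}$, and also on $\mathcal{M}^{\frac p2,\frac n2}(\omega)$ because $\omega\in A_{\frac p2}$ and $\frac p2>1$ (a standard Komori--Shirai type result, which I assume is recorded in Section \ref{Secc_Notations}). Hence
$$\Big\|\tfrac{1}{-\Delta}\mathbb{P}\vf\Big\|_E\le C\big(\|\vf\|_{\dot{W}^{-1}_{\frac{p}{2}, \frac{n}{2}}}+\|\vf\|_{\dot{W}^{-1}_{\frac{p}{2}, \frac{n}{2}}(\omega)}\big).$$

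\textbf{Step 2: the bilinear estimate.} This is the main obstacle. We must show $\|B(\vu,\vv)\|_E\le C_B\|\vu\|_E\|\vv\|_E$. By the boundedness of Riesz transforms,
$$\|B(\vu,\vv)\|_{\dot{W}^1_{\frac p2,\frac n2}(\cdot)}\lesssim \big\|(-\Delta)^{-1/2}\big((\mathcal{T}(\vu)\cdot\vn)\vv\big)\big\|_{\mathcal{M}^{\frac p2,\frac n2}(\cdot)}.$$
The natural route is to control the negative-order term by a fractional integration (Adams/Hardy--Littlewood--Sobolev) estimate. Since $\mathcal{I}_1$ maps $\mathcal{M}^{r,\frac{n}{3}}\to\mathcal{M}^{\cdot,\frac n2}$, one would like the product $(\mathcal{T}(\vu)\cdot\vn)\vv$ to lie in a Morrey space of index $\frac n3$. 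H\"older's inequality in Morrey spaces gives
$$\|\mathcal{T}(\vu)\,\vn\vv\|_{\mathcal{M}^{a,\frac{n}{3}}}\le \|\mathcal{T}(\vu)\|_{\mathcal{M}^{p,n}}\,\|\vn\vv\|_{\mathcal{M}^{\frac p2,\frac n2}},$$
while Sobolev embedding gives $\dot{W}^1_{\frac p2,\frac n2}\subset \mathcal{M}^{p^*,n}$ for a suitable $p^*\ge p$.

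The delicate point is the weighted part: fractional integration and H\"older do not mix freely in weighted Morrey spaces. Here I would use Lemma \ref{LemmaRough}, the mixed weighted/unweighted estimate for $\mathcal{T}$ with $\Omega_k\in L^\rho(\mathbb{S}^{n-1})$ from \cite{ChMarcociMarcoci}. It should control $\mathcal{T}(\vu)$ in an $L^\infty$-type or unweighted Morrey norm using only the unweighted part of $\|\vu\|_E$. The product is then placed in the weighted space by taking the sup-type or unweighted factor on $\mathcal{T}(\vu)$ and the weighted factor on $\vn\vv$. The conditions $p<n$ and $\rho<n$ should be exactly what makes the exponents in Lemma \ref{LemmaRough} match. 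If a direct $(-\Delta)^{-1/2}$ bound is unavailable in the weighted space, I would first try rewriting $(\mathcal{T}(\vu)\cdot\vn)\vv=div(\mathcal{T}(\vu)\otimes\vv)-(div\,\mathcal{T}(\vu))\vv$. Since $\mathcal{T}$ commutes with derivatives, $div\,\mathcal{T}(\vu)=\mathcal{T}(div\,\vu)=0$. This gives $B(\vu,\vv)=\frac{1}{-\Delta}\mathbb{P}\,div(\mathcal{T}(\vu)\otimes\vv)$, so only zero-order Riesz-type operators hit the product. It then suffices to bound $\|\mathcal{T}(\vu)\otimes\vv\|$ in $\mathcal{M}^{\frac p2,\frac n2}$ and in $\mathcal{M}^{\frac p2,\frac n2}(\omega)$, using H\"older's inequality, the Sobolev embedding $\dot W^1_{\frac p2,\frac n2}\subset\mathcal{M}^{p,n}$-type spaces, and Lemma \ref{LemmaRough}.

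\textbf{Step 3: conclusion.} With $\|\frac{1}{-\Delta}\mathbb{P}\vf\|_E\le\delta$ and $4C_B\delta<1$, the standard Picard lemma gives a unique fixed point in the ball of radius $2\delta$ of $E$. The pressure is defined through the Riesz-transform formula. Its membership in $\mathcal{M}^{\frac p2,\frac n2}\cap\mathcal{M}^{\frac p2,\frac n2}(\omega)$ follows from the same product estimates of Step 2, together with the weighted and unweighted boundedness of $\mathcal{R}_i\mathcal{R}_j$.
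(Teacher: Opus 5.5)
Your architecture is the paper's. You use Picard in $E$ with the sum norm, and the divergence form $(\mathcal{T}(\vu)\cdot\vn)\vv=div(\mathcal{T}(\vu)\otimes\vv)$; the paper uses this implicitly, and your justification $div\,\mathcal{T}(\vu)=\mathcal{T}(div\,\vu)=0$ is the right one. You also use boundedness of $\mathbb{P}$ and the Riesz transforms in $\mathcal{M}^{\frac p2,\frac n2}$ and $\mathcal{M}^{\frac p2,\frac n2}(\omega)$, and recover the pressure via $\mathcal{R}_i\mathcal{R}_j$.

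\textbf{The gap} is in the weighted half of the bilinear estimate, and the mechanism you describe there would fail.
\begin{itemize}
\item[(i)] There is no $L^\infty$ bound for $\mathcal{T}(\vu)$ in terms of $\|\vu\|_{\dot W^1_{\frac p2,\frac n2}}$. This fails already by scaling: that space has the homogeneity of $L^n$, not $L^\infty$.
\item[(ii)] H\"older's inequality \eqref{Holder} needs every factor measured with the same weight. If $g$ lies in an unweighted Morrey space and $h\in\mathcal{M}^{p,n}(\omega)$, then $\int_B|gh|^{\frac p2}\omega\,dy$ is not controlled unless $\omega$ is bounded.
\end{itemize}
So putting an ``unweighted factor on $\mathcal{T}(\vu)$ and a weighted factor on $\vv$'' does not place the product in $\mathcal{M}^{\frac p2,\frac n2}(\omega)$. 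Your first, non-divergence route has further problems. It needs a weighted fractional-integration bound for $(-\Delta)^{-1/2}$, which the paper only has under the Ahlfors condition. Even its unweighted Adams step from $\mathcal{M}^{\frac p3,\frac n3}$ requires $p>3$.

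Your fallback, ``Sobolev embedding plus Lemma \ref{LemmaRough}'', does not close the weighted estimate as stated either. The unweighted embedding says nothing about $\mathcal{M}^{p,n}(\omega)$. The purely weighted embedding $\dot W^1_{\frac p2,\frac n2}(\omega)\subset\mathcal{M}^{p,n}(\omega)$ is false for general $A_{\frac p2}$ weights; power weights $|x|^\gamma$ break it by scaling, which is why Lemma \ref{LemmeFullyWeighted} needs the Ahlfors condition.

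\textbf{The missing idea} is to apply \eqref{Holder} entirely in the weighted scale and to bound both factors by geometric means of the unweighted and weighted norms:
\begin{align*}
\|\mathcal{T}(\vu)\otimes\vv\|_{\mathcal{M}^{\frac p2,\frac n2}(\omega)}&\le\|\mathcal{T}(\vu)\|_{\mathcal{M}^{p,n}(\omega)}\|\vv\|_{\mathcal{M}^{p,n}(\omega)}\\
&\le C\big(\|\vu\|_{\dot W^1_{\frac p2,\frac n2}}\|\vu\|_{\dot W^1_{\frac p2,\frac n2}(\omega)}\big)^{\frac12}\big(\|\vv\|_{\dot W^1_{\frac p2,\frac n2}}\|\vv\|_{\dot W^1_{\frac p2,\frac n2}(\omega)}\big)^{\frac12}.
\end{align*}
The second line uses two results.
\begin{itemize}
\item For $\mathcal{T}(\vu)$: point 2) of Lemma \ref{LemmaRough}, read with $\mathcal{M}^{p,n}(\omega)$ on the left, as in the paper's proof.
\item For $\vv$: Lemma \ref{LemmeMixedWeight} with $\alpha=1$ and $(p,q)\to(\frac p2,\frac n2)$, so that $\frac{\alpha q}{n}=\frac12$.
\end{itemize}
You then conclude with $\sqrt{ab}\le a+b$.

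The mixed lemma rests on the Hedberg-type bound $|\mathcal{I}_1 g|\le C\mathscr{M}(g)^{\frac12}\|g\|_{\mathcal{M}^{\frac p2,\frac n2}}^{\frac12}$, whose tail is estimated in the unweighted Morrey norm. One then applies \eqref{PuissanceFracMorrey} and the boundedness of $\mathscr{M}$ on $\mathcal{M}^{\frac p2,\frac n2}(\omega)$; this is exactly where $\omega\in A_{\frac p2}$ enters. So each factor's weighted bound needs both components of $\|\cdot\|_E$, not only the unweighted part of $\|\vu\|_E$ as you anticipated. This interplay is why the theorem is posed in the intersection space.
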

%%%%%%%%%%%%%%%%%%%%%%%%%%%%%%%%%%%%%%%%%%%%%%%%%%%
Some remarks are in order here. First note that when $p=n$, we recover a weighted Lebesgue space framework, and if moreover we set $\omega=1$, we obtain an existence result in a classical Lebesgue/Sobolev setting (see the Corollary \ref{CoroSimple} below). However, even in this simplified setting, the previous theorem seems to be new -to the best of our knowledge- due to the presence of the rough singular integral operator $\mathcal{T}$ in the equation (\ref{Equation_Intro}). Second, in the case when we consider a fully weighted setting, the situation is slightly more delicate as we do not dispose of well-suited inequalities for any generic Muckenhoupt weight and we will need to impose a special behavior to the weights in order to perform our computations (see Theorem \ref{Theo_FullyWeighted} below). In this sense, the previous result given in the Theorem \ref{Theorem_Existence} above presents an interesting compromise between an unweighted setting and a fully weighted one that will require some special conditions over the weights. \\

Note also that the previous existence theorem requires a ``smallness'' condition for the external force $\vf$ and the solution $(\vu, \pi)$ is, by construction, also ``small'' in the sense that the norm of $(\vu, \pi)$ in the space $\dot{W}^1_{\frac{p}{2},\frac{n}{2}}\cap \dot{W}^1_{\frac{p}{2},\frac{n}{2}}(\omega)\times \mathcal{M}^{\frac{p}{2},\frac{n}{2}}\cap \mathcal{M}^{\frac{p}{2},\frac{n}{2}}(\omega)$ is small (see the general Lemma \ref{LemmeBP} below for the details).\\

Next, we remark that the choice of the indexes $2<p< n$ (that define the Morrey spaces and the Sobolev spaces used here as well as the class $A_p$ for the weight $\omega$), is a consequence of the functional inequalities that allow us to control the operator $\mathcal{T}$ and in the case of the previous result, we only dispose of some estimates stated in the Lemma \ref{LemmaRough} below. \\

Note now that the structure of the functional spaces studied above is the same for the weighted spaces and for the unweighted ones, since we are using as ``base'' spaces the Morrey spaces. These spaces appear in a very natural manner when dealing with inequalities that involve rough singular operators (see \cite{ChMarcociMarcoci} and \cite{ChMarcociMarcoci1}). Finally, remark that in the particular setting considered in the Theorem \ref{Theorem_Existence}, which mixes unweighted and weighted norms, the study of solutions in another functional framework seems to be a completely open problem that will probably require new functional inequalities involving the operator $\mathcal{T}$ and the properties of the weights considered.\\

We study now a fully weighted version of the previous result, but for this we need to introduce some notation and we will say that a weight $\omega$ satisfies the ${\bf d}$-lower Ahlfors condition if we have 
\begin{equation}\label{Ahlfors}
Cr^{\bf d}\leq \omega(B(x,r)),\quad \mbox{for all } r>0, \  x\in \mathbb{R}^n.
\end{equation}
Note that this is an additional condition imposed on a Muckenhoupt weight, and a generic Muckenhoupt weight does not need to satisfy it. See \cite{Bjorn}, \cite{Hoang}, \cite{Samko} for more details and some examples on this particular hypothesis.\\ 

With the help of this ${\bf d}$-lower Ahlfors condition we can state the following result: 
%%%%%%%%%%%%%%%%%%%%%%%%%%%%%%%%%%%%%%%%%%%%%%%%%%%
\begin{Theoreme}[\bf Fully Weighted existence]\label{Theo_FullyWeighted}
Over the Euclidean space $\mathbb{R}^n$ with $n\geq 3$, consider a Muckenhoupt weight $\omega \in A_{\frac{p}{2}}$ that satisfies the ${\bf d}$-lower Ahlfors condition (\ref{Ahlfors}). If $\vf:\mathbb{R}^n\longrightarrow \mathbb{R}^n$ is an external force such that the quantity $\|\vf\|_{\dot{W}^{-1}_{\frac{p}{2}, \frac{\bf d}{2}}(\omega)}$ is small enough (with $2<p\leq {\bf d}$), then there exists one solution $(\vu, \pi)$ of the stationary Navier-Stokes equations \eqref{Equation_Intro} such that we have $\vu\in \dot{W}^1_{\frac{p}{2},\frac{\bf d}{2}}(\omega)$ and $\pi\in \mathcal{M}^{\frac{p}{2},\frac{\bf d}{2}}(\omega)$.
\end{Theoreme}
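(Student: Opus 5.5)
We recast \eqref{Equation_Intro} as a fixed point problem and then apply the Banach–Picard principle. Applying the Leray projector $\mathbb{P}$ removes the pressure, and since $div(\vu)=0$ we have $(\mathcal{T}(\vu)\cdot\vn)\vu = div(\vu\otimes\mathcal{T}(\vu))$ up to a term involving $div(\mathcal{T}(\vu))=\mathcal{T}(div\,\vu)=0$, because $\mathcal{T}$ is a convolution operator. The problem therefore becomes
\[
\vu=\frac{1}{-\Delta}\mathbb{P}(\vf)-\frac{1}{-\Delta}\mathbb{P}\,div\big(\mathcal{T}(\vu)\otimes\vu\big)=\vu_0+B(\vu,\vu).
\]
We work in $E=\dot{W}^1_{\frac{p}{2},\frac{\bf d}{2}}(\omega)$ and invoke the general Lemma \ref{LemmeBP}. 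If $\|\vu_0\|_E\leq \delta$ and $\|B(\vu,\vv)\|_E\leq C_B\|\vu\|_E\|\vv\|_E$ with $4C_B\delta<1$, the lemma yields a unique small solution.

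For the linear part, $\|\vu_0\|_E=\|(-\Delta)^{1/2}(-\Delta)^{-1}\mathbb{P}\vf\|_{\mathcal{M}^{\frac p2,\frac{\bf d}2}(\omega)}$. The operator $(-\Delta)^{-1/2}\mathbb{P}$ composed with $(-\Delta)^{1/2}$ is a combination of Riesz transforms, which are Calderón–Zygmund operators. These are bounded on $\mathcal{M}^{\frac p2,\frac{\bf d}2}(\omega)$ because $\omega\in A_{\frac p2}$ and $\frac p2>1$ (the weighted Morrey boundedness of CZ operators). This gives $\|\vu_0\|_E\leq C\|\vf\|_{\dot{W}^{-1}_{\frac p2,\frac{\bf d}2}(\omega)}$, which is small by hypothesis.

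For the bilinear part, the same Riesz-transform argument gives
\[
\|B(\vu,\vv)\|_E\leq C\|\mathcal{T}(\vu)\otimes\vv\|_{\mathcal{M}^{\frac p2,\frac{\bf d}2}(\omega)}.
\]
We use the Hölder inequality in weighted Morrey spaces, $\mathcal{M}^{p,{\bf d}}(\omega)\cdot\mathcal{M}^{p,{\bf d}}(\omega)\subset\mathcal{M}^{\frac p2,\frac{\bf d}2}(\omega)$. This holds because $\omega(B)^{\frac2p-\frac2{\bf d}}=\big(\omega(B)^{\frac1p-\frac1{\bf d}}\big)^2$. It then suffices to bound $\|\mathcal{T}(\vu)\|_{\mathcal{M}^{p,{\bf d}}(\omega)}$ and $\|\vv\|_{\mathcal{M}^{p,{\bf d}}(\omega)}$ by the $E$-norms. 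Note that $\dot{W}^1_{\frac p2,\frac{\bf d}2}(\omega)$ has gradients in a Morrey space of homogeneity $\frac{\bf d}{2}$ relative to $\omega$, while the target has homogeneity ${\bf d}$. The required estimate is therefore a weighted Sobolev/Hardy–Littlewood–Sobolev embedding $\|(-\Delta)^{-1/2}g\|_{\mathcal{M}^{p,{\bf d}}(\omega)}\leq C\|g\|_{\mathcal{M}^{\frac p2,\frac{\bf d}2}(\omega)}$. This is exactly where the ${\bf d}$-lower Ahlfors condition \eqref{Ahlfors} enters. Splitting the Riesz potential $\int |x-y|^{1-n}g(y)dy$ into $|x-y|<r$ and $|x-y|\geq r$ in Hedberg's manner, we bound the near part by $r\,\mathcal{M}g(x)$. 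For the far part, dyadic annuli and the weighted Morrey norm give $\sum_k (2^kr)^{1-n}|B_k|\,\omega(B_k)^{-\frac2{\bf d}}\|g\|$. With $\omega\in A_{\frac p2}$, so that $|B|\lesssim \omega(B)^{2/p}\omega^{-?}$-type comparisons hold, and with $\omega(B_k)\geq C(2^kr)^{\bf d}$, this sum is controlled by $r^{-1}\|g\|$. Optimizing in $r$ then gives a pointwise bound $|I_1g|\lesssim(\mathcal{M}g)^{1/2}\|g\|^{1/2}$. Since $\mathcal{M}$ is bounded on $\mathcal{M}^{\frac p2,\frac{\bf d}2}(\omega)$, squaring-type exponent bookkeeping yields the embedding into $\mathcal{M}^{p,{\bf d}}(\omega)$. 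This is the step I expect to be the main obstacle: the precise comparison between Lebesgue measure and $\omega$-measure on large balls must be made to work, and the hypothesis $p\leq{\bf d}$ should be what keeps the relevant geometric series convergent.

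Finally, for $\mathcal{T}(\vu)$ we use the weighted Morrey estimate for the rough operator (Lemma \ref{LemmaRough}, in its weighted form under the Ahlfors condition, as in \cite{ChMarcociMarcoci}), $\|\mathcal{T}(\vu)\|_{\mathcal{M}^{p,{\bf d}}(\omega)}\leq C\|\vu\|_{\mathcal{M}^{p,{\bf d}}(\omega)}$, which is valid for $\Omega_k\in L^\rho(\mathbb{S}^{n-1})$ and the given range of indices. Combining the pieces gives $\|B(\vu,\vv)\|_E\leq C_B\|\vu\|_E\|\vv\|_E$, and the fixed point follows.

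The pressure is recovered as $\pi=\frac{1}{-\Delta}div\big(\vf-div(\mathcal{T}(\vu)\otimes\vu)\big)$, that is, as Riesz transforms of $(-\Delta)^{-1/2}\vf$ and of $\mathcal{T}(\vu)\otimes\vu$. These lie in $\mathcal{M}^{\frac p2,\frac{\bf d}2}(\omega)$ by the same CZ boundedness and the product estimate above.
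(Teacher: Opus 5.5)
Apart from one step, your argument is the paper's. It uses the same Leray reformulation, Banach--Picard in $E=\dot{W}^{1}_{\frac{p}{2},\frac{\bf d}{2}}(\omega)$, Riesz-transform bounds for $\vu_0$, $B$ and $\pi$, and the H\"older inequality $\mathcal{M}^{p,{\bf d}}(\omega)\cdot\mathcal{M}^{p,{\bf d}}(\omega)\subset\mathcal{M}^{\frac{p}{2},\frac{\bf d}{2}}(\omega)$. Your Hedberg-type embedding is exactly Lemma \ref{LemmeFullyWeighted} with $\alpha=1$ and indices $(\frac{p}{2},\frac{\bf d}{2})$.

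The genuine gap is the drift. You assert $\|\mathcal{T}(\vu)\|_{\mathcal{M}^{p,{\bf d}}(\omega)}\leq C\|\vu\|_{\mathcal{M}^{p,{\bf d}}(\omega)}$ and attribute it to Lemma \ref{LemmaRough}, but that lemma contains no zero-order bound. All three of its points are Sobolev-type estimates from $\dot{W}^{1}$ into a Morrey space. In point 3) the Ahlfors condition is what makes this gain of homogeneity possible; it would play no role in a zero-order bound.

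Boundedness of $\mathcal{T}$ on $\mathcal{M}^{p,{\bf d}}(\omega)$ for an arbitrary $\omega\in A_{\frac{p}{2}}$ is precisely what is not available. With $\Omega_k\in L^\rho(\mathbb{S}^{n-1})$ only, $\mathcal{T}$ lies outside Calder\'on--Zygmund theory. The known weighted bounds for such rough operators (Duoandikoetxea, Watson) require restricted classes already in $L^p(\omega)$, such as $\omega\in A_{p/\rho'}$ with $\rho'=\frac{\rho}{\rho-1}$. Since $A_{p/\rho'}\subsetneq A_{\frac{p}{2}}$ as soon as $\rho<2$, which the statement allows, your hypotheses do not give this estimate.

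The repair is not to factor through a zero-order bound. Apply point 3) of Lemma \ref{LemmaRough} directly, with its indices $(p,q)$ replaced by $(\frac{p}{2},\frac{\bf d}{2})$, so that $\frac{\bf d}{q}=2>1$, $p^*=p$ and $q^*={\bf d}$. This gives $\|\mathcal{T}(\vu)\|_{\mathcal{M}^{p,{\bf d}}(\omega)}\leq C\|\vu\|_{\dot{W}^{1}_{\frac{p}{2},\frac{\bf d}{2}}(\omega)}$, which is what the paper does. Together with Lemma \ref{LemmeFullyWeighted} it closes $\|B(\vu,\vv)\|_E\leq C\|\vu\|_E\|\vv\|_E$.

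The step you flagged as the main obstacle is in fact fine. The comparison you left unspecified is the $A_{\frac{p}{2}}$ averaging inequality $\frac{1}{|B|}\int_B|g|\leq[\omega]_{A_{p/2}}^{2/p}\bigl(\frac{1}{\omega(B)}\int_B|g|^{\frac{p}{2}}\omega\bigr)^{2/p}$. It produces exactly your factor $\omega(B_k)^{-2/{\bf d}}\|g\|_{\mathcal{M}^{\frac{p}{2},\frac{\bf d}{2}}(\omega)}$. The far-field series converges because $\alpha=1<\frac{\bf d}{q}=2$, not because $p\leq{\bf d}$; the latter is only needed so that $\frac{p}{2}\leq\frac{\bf d}{2}$.
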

%%%%%%%%%%%%%%%%%%%%%%%%%%%%%%%%%%%%%%%%%%%%%%%%%%%
\noindent Remark that due to the lower Ahlfors condition (\ref{Ahlfors}), it is possible to consider different spaces of resolution: indeed, instead of considering Morrey-Sobolev spaces with indexes $(\frac{p}{2}, \frac{n}{2})$ with $2<p< n$ where the dimension $n$ is completely fixed, we can work with some parameters $(\frac{p}{2}, \frac{\bf d}{2})$ with $2<p\leq {\bf d}$, where the index ${\bf d}$ only depends on the weight. Note that the condition $2<p$ is necessary since we need to use the boundedness of the Riesz transforms and the set of parameters $(\frac{p}{2}, \frac{\bf d}{2})$ are needed since we are dealing with a bilinear term and at some point we will use the H\"older inequality in order to close the fixed point argument. \\

In the next result, we give a criterion for the uniqueness of the solutions obtained in the previous theorem:
%%%%%%%%%%%%%%%%%%%%%%%%%%%%%%%%%%%%%%%%%%%%%%%%%%%
\begin{Theoreme}[\bf Uniqueness]\label{Theorem_Uniqueness}
Consider $\vu, \vv\in \dot{W}^1_{\frac{p}{2},\frac{n}{2}}\cap \dot{W}^1_{\frac{p}{2},\frac{n}{2}}(\omega)$ be two solutions associated to the same external force $\vf\in \dot{W}^{-1}_{\frac{p}{2}, \frac{n}{2}}\cap \dot{W}^{-1}_{\frac{p}{2}, \frac{n}{2}}(\omega)$ of the equation (\ref{Equation_Intro}) where $\omega\in A_{\frac{p}{2}}$ with $2<p< n$.\\

\noindent If $\|\vu\|_{\dot{W}^{1}_{\frac{p}{2}, \frac{n}{2}}}+\|\vv\|_{\dot{W}^{1}_{\frac{p}{2}, \frac{n}{2}}}$ is small enough, then we have $\vu=\vv$.
\end{Theoreme}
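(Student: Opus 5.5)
The plan is to show that $\vec{w}=\vu-\vv$ satisfies a linear fixed-point identity and that the small drift norms force $\vec{w}=0$. Applying the Leray projector $\mathbb{P}$ (built from Riesz transforms) to \eqref{Equation_Intro} eliminates the pressure and gives, for any solution in the class considered,
\begin{equation*}
\vu=\frac{1}{-\Delta}\mathbb{P}\vf-\frac{1}{-\Delta}\mathbb{P}\,div\big(\mathcal{T}(\vu)\otimes\vu\big).
\end{equation*}
This uses $div(\vu)=0$ together with the fact that $\mathcal{T}$ is a convolution operator, so it commutes with derivatives and $div(\mathcal{T}(\vu))=\mathcal{T}(div\,\vu)=0$; hence $(\mathcal{T}(\vu)\cdot\vn)\vu=div(\mathcal{T}(\vu)\otimes\vu)$. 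The same identity holds for $\vv$. Subtracting and writing $\mathcal{T}(\vu)\otimes\vu-\mathcal{T}(\vv)\otimes\vv=\mathcal{T}(\vec{w})\otimes\vu+\mathcal{T}(\vv)\otimes\vec{w}$, we obtain
\begin{equation*}
\vec{w}=-\frac{1}{-\Delta}\mathbb{P}\,div\big(\mathcal{T}(\vec{w})\otimes\vu+\mathcal{T}(\vv)\otimes\vec{w}\big).
\end{equation*}

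The next step is to estimate the right-hand side in the unweighted norm $\dot{W}^1_{\frac{p}{2},\frac{n}{2}}$ only. The operator $(-\Delta)^{\frac12}\frac{1}{-\Delta}\mathbb{P}\,div$ is a combination of Riesz transforms, and these are bounded on $\mathcal{M}^{\frac{p}{2},\frac{n}{2}}$ because $\frac{p}{2}>1$. So it is enough to bound $\|\mathcal{T}(\vec{w})\otimes\vu\|_{\mathcal{M}^{\frac p2,\frac n2}}$. By Hölder's inequality in Morrey spaces this is at most $\|\mathcal{T}(\vec{w})\|_{\mathcal{M}^{p,n}}\|\vu\|_{\mathcal{M}^{p,n}}$. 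The Sobolev embedding for Morrey spaces (Hardy–Littlewood–Sobolev/Adams) gives $\dot{W}^1_{\frac p2,\frac n2}\subset\mathcal{M}^{p,n}$, since $\frac{2}{n}-\frac{1}{n}=\frac{1}{n}$. The boundedness of $\mathcal{T}$ on $\mathcal{M}^{p,n}$ with $\Omega\in L^\rho(\mathbb{S}^{n-1})$ is what Lemma \ref{LemmaRough} should provide in the unweighted case, or else a Sobolev-type version of it. Together these give
\begin{equation*}
\|\vec{w}\|_{\dot{W}^1_{\frac p2,\frac n2}}\leq C\big(\|\vu\|_{\dot{W}^1_{\frac p2,\frac n2}}+\|\vv\|_{\dot{W}^1_{\frac p2,\frac n2}}\big)\|\vec{w}\|_{\dot{W}^1_{\frac p2,\frac n2}}.
\end{equation*}
This is the same bilinear estimate used to build the solution in Theorem \ref{Theorem_Existence} via Lemma \ref{LemmeBP}, so I would reuse it. With $C(\|\vu\|+\|\vv\|)<1$ it forces $\|\vec{w}\|_{\dot{W}^1_{\frac p2,\frac n2}}=0$.

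The final step is to pass from this norm being zero to $\vu=\vv$. Here the weighted assumption is used: $\vec{w}\in\dot{W}^1_{\frac p2,\frac n2}(\omega)$ with $\omega\in A_{\frac p2}$, so $\vec{w}$ belongs to a realization of the homogeneous space modulo nothing. Concretely, $(-\Delta)^{\frac12}\vec{w}=0$ makes $\vec{w}$ a polynomial, and membership in $\mathcal{M}^{p,n}$ (through the same embedding) forces that polynomial to vanish.

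The main obstacle is the rough-operator step: we need $\mathcal{T}$ bounded on $\mathcal{M}^{p,n}$ with constants that allow closing the estimate, under the restrictions $1<\rho<n$ and $2<p<n$. If Lemma \ref{LemmaRough} is stated only in mixed weighted/unweighted form, I would instead run the argument in the intersection norm $\dot{W}^1_{\frac p2,\frac n2}\cap\dot{W}^1_{\frac p2,\frac n2}(\omega)$. In that version, smallness of only the unweighted norms of $\vu,\vv$ has to control the weighted part of $\vec{w}$, with the unweighted factor carrying the small constant.
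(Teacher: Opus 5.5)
Your proposal is correct and is essentially the paper's proof: the same difference identity for $\vu-\vv$, the estimate in the unweighted norm $\dot{W}^1_{\frac p2,\frac n2}$ via boundedness of the Leray projector and Riesz transforms, H\"older's inequality in $\mathcal{M}^{p,n}$, Lemma \ref{LemmeNoWeight}, and the Sobolev-type bound $\|\mathcal{T}(\vec{\phi})\|_{\mathcal{M}^{p,n}}\leq C\|\vec{\phi}\|_{\dot{W}^1_{\frac p2,\frac n2}}$ from point 1) of Lemma \ref{LemmaRough} (stated there for $\frac{\rho n}{\rho n+\rho-n}<p<n$, a restriction the paper also uses tacitly), followed by absorption through smallness, so your intersection-norm fallback is not needed. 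The only correction is that the weighted hypothesis plays no role in your last step: the unweighted embedding $\dot{W}^1_{\frac p2,\frac n2}\subset\mathcal{M}^{p,n}$ (equivalently, your polynomial argument) already yields $\vu=\vv$, exactly as in the paper.
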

%%%%%%%%%%%%%%%%%%%%%%%%%%%%%%%%%%%%%%%%%%%%%%%%%%%
\noindent Let us remark that only a smallness information in the unweighted norm $\dot{W}^{1}_{\frac{p}{2}, \frac{n}{2}}$ is required to deduce the uniqueness of the solutions. This fact allows us, when dealing with uniqueness problems, to dissociate the information available on the unweighted norm from that available on the weighted norm. Let us mention also that the use of the (simpler) unweighted space $\dot{W}^{1}_{\frac{p}{2}, \frac{n}{2}}$ is related to the fact that in this case the functional inequalities are easier to apply. A symmetrical uniqueness result in the weighted norm $\dot{W}^{1}_{\frac{p}{2}, \frac{n}{2}}(\omega)$ is possible but it requires, as mentioned before, some additional conditions over the weights. See the details in the Appendix \ref{AppendixUniqueness}. \\

Let us show now how the previous uniqueness result can be applied in a more general framework. Indeed, consider $\vu \in \dot{W}^1_{\frac{p}{2},\frac{n}{2}}\cap \dot{W}^1_{\frac{p}{2},\frac{n}{2}}(\omega)$ be a solution of the equation (\ref{Equation_Intro}) obtained by the Theorem \ref{Theorem_Existence} above and which is associated to one (small) external force $\vf\in \dot{W}^{-1}_{\frac{p}{2}, \frac{n}{2}}\cap \dot{W}^{-1}_{\frac{p}{2}, \frac{n}{2}}(\omega)$. Note that, by construction, this solution $\vu$ is \emph{small} in the sense that we have $\|\vu\|_{ \dot{W}^1_{\frac{p}{2},\frac{n}{2}}}+\|\vu\|_{ \dot{W}^1_{\frac{p}{2},\frac{n}{2}}(\omega)}\ll\epsilon$. Let now $\vv$ be another solution of the equation (\ref{Equation_Intro}) associated to the same small external force $\vf$, but such that $\|\vv\|_{\dot{W}^1_{\frac{p}{2},\frac{n}{2}}}\ll\epsilon$ and $\|\vv\|_{ \dot{W}^1_{\frac{p}{2},\frac{n}{2}}(\omega)}\gg 1$, \emph{i.e.} the norm of $\vv$ in the space $\dot{W}^1_{\frac{p}{2},\frac{n}{2}}$ is small while the norm of $\vv$ in the space $\dot{W}^1_{\frac{p}{2},\frac{n}{2}}(\omega)$ is big: remark that this particular situation can be obtained without any problem by using an adapted weight $\omega$. Note moreover that this solution $\vv$ \emph{cannot} be obtained following the Theorem \ref{Theorem_Existence} since its global norm in the space $\dot{W}^1_{\frac{p}{2},\frac{n}{2}}\cap \dot{W}^1_{\frac{p}{2},\frac{n}{2}}(\omega)$ is not small. Nevertheless, and despite this fact, Theorem \ref{Theorem_Uniqueness} allows us to study if the solutions $\vu$ and $\vv$ are different or not and since we have $\|\vu\|_{\dot{W}^1_{\frac{p}{2},\frac{n}{2}}}+\|\vv\|_{\dot{W}^1_{\frac{p}{2},\frac{n}{2}}}\ll\epsilon$, we can actually deduce that $\vu\equiv \vv$. As we can see, the separated study given in the Theorem \ref{Theorem_Uniqueness} allows us to obtain a uniqueness result even outside the framework of the Theorem \ref{Theorem_Existence} and we can make explicit the use of different weights that will allow to separate the information on the mixed norms considered above. \\

The plan of the article is the following. In Section \ref{Secc_Notations} we recall some properties of the Morrey spaces as well as the essential inequalities that will be used in the sequel. In Section \ref{Secc_Theo1} we prove the Theorem \ref{Theorem_Existence}, the Theorem \ref{Theo_FullyWeighted} is studied in Section \ref{SeccTheo_FullyWeighted}, while Section \ref{Secc_Theo2} is devoted to the Theorem \ref{Theorem_Uniqueness}. The Appendix \ref{Secc_Weighted_ineq} is devoted to some inequalities that will be necessary, and in the Appendix \ref{AppendixUniqueness}, we study some uniqueness results in a fully weighted framework. 

%%%%%%%%%%%%%%%%%%%%%%%%%%%%%%%%%%%%%%%%%%%%%%%%%%%
\mysection{Weights, functional spaces and operators}\label{Secc_Notations}
We first recall that a measurable function $\omega:\mathbb{R}^n\longrightarrow ]0,+\infty[$ is a Muckenhoupt weight in the class $A_p$ for some parameter $p>1$ if we have the condition
$$[\omega]_{A_{p}}=\underset{B}{\sup}\left(\frac{1}{|B|}\int_{B}\omega(x)dx\right)\left(\frac{1}{|B|}\int_{B}\omega(x)^{-\frac{1}{p-1}}dx\right)^{p-1}<+\infty.$$
Recall also that for $1<p_0<p_1$, we have the class inclusion $A_{p_0}\subset A_{p_1}$ (see the book \cite{Grafakos} for more general details about Muckenhoupt weights). 
The fact that a weight $\omega$ belongs to a Muckenhoupt class $A_p$ allows us to obtain a number of interesting properties. Indeed, for some indexes $1<p\leq q<+\infty$, if $\omega\in A_p$ we have the following estimates: 
$$\|\mathscr{M}(\vec{\phi})\|_{\mathcal{M}^{p,q}(\omega)}\leq C\|\vec{\phi}\|_{\mathcal{M}^{p,q}(\omega)},$$
where $\mathscr{M}(\vec{\phi})=\underset{x\in B}{\sup}\ \frac{1}{|B|}\displaystyle{\int_{B}|\vec{\phi}(y)|dy}$ is the classical Hardy-Littlewood maximal function and we also have 
$$\|R_j(\vec{\phi})\|_{\mathcal{M}^{p,q}(\omega)}\leq C\|\vec{\phi}\|_{\mathcal{M}^{p,q}(\omega)},$$
where $R_j$ are the Riesz transforms. See \cite[Theorem 3.2 \& Theorem 3.3]{Komori} for a proof of these two estimates. \\

Let us also recall that we have the following version of the H\"older inequality in the setting of the weighted Morrey spaces:
\begin{equation}\label{Holder}
\|\vec{\phi}\cdot \vec{\psi}\|_{\mathcal{M}^{p,q}(\omega)}\leq \|\vec{\phi}\|_{\mathcal{M}^{p_0,q_0}(\omega)}\|  \vec{\psi}\|_{\mathcal{M}^{p_1,q_1}(\omega)},
\end{equation}
for $1\leq p, p_0, p_1, q, q_0, q_1\leq +\infty$ such that $p\leq q$, $p_0\leq q_0$, $p_1\leq q_1$ and
 $\frac{1}{p}=\frac{1}{p_0}+\frac{1}{p_1}$,  $\frac{1}{q}=\frac{1}{q_0}+\frac{1}{q_1}$. 
We will need the following identity valid for any indexes $1\leq p\leq q<+\infty$ and $s>0$ such that $1\leq sp\leq sq$:
\begin{equation}\label{PuissanceFracMorrey}
\||\vec{\phi}|^s\|_{\mathcal{M}^{p,q}(\omega)}=\|\vec{\phi}\|_{\mathcal{M}^{sp,sq}(\omega)}^s.
\end{equation} 
Now, for $s,\alpha\in \mathbb{R}$ and $1<p\leq q<+\infty$, we have the identities
$$\|(-\Delta)^{\frac{\alpha}{2}}\vec{\phi}\|_{\dot{W}^{s}_{p,q}(\omega)}=\|(-\Delta)^{\frac{\alpha}{2}}(-\Delta)^{\frac{s}{2}}\vec{\phi}\|_{\mathcal{M}^{p,q}(\omega)}=\|\vec{\phi}\|_{\dot{W}^{s+\alpha}_{p,q}(\omega)},$$
and we also have the boundedness of the maximal functions (see \cite{Komori}) and of the Riesz transforms in the weighted Morrey-Sobolev spaces (see \cite{Komori}): 
$$\|\mathscr{M}(\vec{\phi})\|_{\dot{W}^{s}_{p,q}(\omega)}\leq C\|\vec{\phi}\|_{\dot{W}^{s}_{p,q}(\omega)} \qquad \mbox{and}\qquad \|R_j(\vec{\phi})\|_{\dot{W}^{s}_{p,q}(\omega)}\leq C\|\vec{\phi}\|_{\dot{W}^{s}_{p,q}(\omega)}.$$
We need now to present the main tools that will be used here, \emph{i.e.} the functional inequalities that will allow us to perform the computations. We can distinguish three cases: the ``unweighted'' inequalities in the setting of the usual Morrey spaces, the ``mixed'' estimates that involve unweighted Morrey spaces \emph{and} weighted ones and, finally, the ``fully'' weighted controls where we only consider weighted Morrey spaces. Although some of the unweighted inequalities that we need here are available in the literature, it is worth to mention here that the mixed and the fully weighted estimates 
have only been developed very recently. 
%%%%%%%%%%%%%%%%%%%%%%%%%%%%%%%%%%%%%%%%%%%%%%%%%%%
\begin{Lemme}[\bf Unweighted inequalities]\label{LemmeNoWeight}
Over the space $\mathbb{R}^n$ with $n\geq 2$, consider a vector field $\vec{\phi}:\mathbb{R}^n\longrightarrow \mathbb{R}$. If $\vec{\phi}\in \dot{W}^{\alpha}_{p,q}$ with $0<\alpha<n$, $1<p\leq q<+\infty$ and $\alpha q<n$, then we have the Sobolev inequality 
$$\|\vec{\phi}\|_{\mathcal{M}^{p^*, q^*}}\leq C\|\vec{\phi}\|_{\dot{W}^{\alpha}_{p, q}},$$
where $p^*=\frac{np}{n-\alpha q}$ and $q^*=\frac{nq}{n-\alpha q}$. 
\end{Lemme}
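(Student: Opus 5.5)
We write $\vec{\phi}=I_\alpha(g)$, where $g=(-\Delta)^{\frac{\alpha}{2}}\vec{\phi}\in\mathcal{M}^{p,q}$ and $I_\alpha=(-\Delta)^{-\frac{\alpha}{2}}$ is the Riesz potential, convolution with $c_{n,\alpha}|y|^{\alpha-n}$. By the definition (\ref{Def_SobolevMorrey}) we have $\|\vec{\phi}\|_{\dot{W}^{\alpha}_{p,q}}=\|g\|_{\mathcal{M}^{p,q}}$. So the lemma reduces to Adams' theorem on Riesz potentials in Morrey spaces,
$$\|I_\alpha g\|_{\mathcal{M}^{p^*,q^*}}\leq C\|g\|_{\mathcal{M}^{p,q}},$$
with $\frac{p^*}{q^*}=\frac{p}{q}$ and $\frac{1}{q^*}=\frac{1}{q}-\frac{\alpha}{n}$. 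These are exactly the exponents of the statement, since $q^*=\frac{nq}{n-\alpha q}$ and $p^*=\frac{p}{q}q^*$. The standard route is a Hedberg-type pointwise estimate; it could simply be cited from Adams, but we sketch it.

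\textbf{Step 1 (pointwise bound).} Fix $x$ and $\delta>0$ and split $I_\alpha g(x)$ into the parts where $|x-y|<\delta$ and $|x-y|\geq\delta$. For the local part, a dyadic decomposition over annuli gives the bound $C\delta^{\alpha}\mathscr{M}g(x)$. For the far part, on each annulus $|x-y|\sim 2^k\delta$ we use Hölder's inequality together with the Morrey condition:
$$\int_{B(x,R)}|g|\leq CR^{n(1-\frac{1}{p})}\Big(\int_{B(x,R)}|g|^p\Big)^{\frac{1}{p}}\leq CR^{n-\frac{n}{q}}\|g\|_{\mathcal{M}^{p,q}}.$$
Summing $R^{\alpha-n}\cdot R^{n-n/q}$ over $R=2^k\delta$ converges because $\alpha q<n$, and gives the bound $C\delta^{\alpha-\frac{n}{q}}\|g\|_{\mathcal{M}^{p,q}}$.

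\textbf{Step 2 (optimization).} Choose $\delta$ to balance the two terms, namely $\delta^{n/q}=\|g\|_{\mathcal{M}^{p,q}}/\mathscr{M}g(x)$. This yields
$$|I_\alpha g(x)|\leq C(\mathscr{M}g(x))^{1-\frac{\alpha q}{n}}\|g\|_{\mathcal{M}^{p,q}}^{\frac{\alpha q}{n}}=C(\mathscr{M}g(x))^{\frac{q}{q^*}}\|g\|_{\mathcal{M}^{p,q}}^{1-\frac{q}{q^*}}.$$

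\textbf{Step 3 (taking norms).} Take the $\mathcal{M}^{p^*,q^*}$ norm and use identity (\ref{PuissanceFracMorrey}) with $s=q/q^*$, which is valid because $sp^*=p\geq 1$. This gives
$$\|(\mathscr{M}g)^{q/q^*}\|_{\mathcal{M}^{p^*,q^*}}=\|\mathscr{M}g\|_{\mathcal{M}^{p,q}}^{q/q^*}.$$
The boundedness of $\mathscr{M}$ on $\mathcal{M}^{p,q}$ for $p>1$ (the unweighted case of the estimate recalled above, with $\omega\equiv1$) then bounds this by $C\|g\|_{\mathcal{M}^{p,q}}^{q/q^*}$. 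Multiplying by $\|g\|_{\mathcal{M}^{p,q}}^{1-q/q^*}$ closes the estimate.

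\textbf{Main obstacle.} The delicate point is justifying that $\vec{\phi}=I_\alpha(-\Delta)^{\frac{\alpha}{2}}\vec{\phi}$, i.e.\ that no polynomial ambiguity arises in the homogeneous space. We handle this by working with functions for which $I_\alpha g$ converges absolutely; the condition $\alpha q<n$ guarantees this, by the same tail estimate as in Step 1. We then define $\dot{W}^{\alpha}_{p,q}$ modulo this convention. The analytic core is the far-field sum in Step 1, which is precisely where $\alpha q<n$ is needed.
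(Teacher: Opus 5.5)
Your proof is correct and follows the same route the paper relies on. The paper cites the Adams--Hedberg inequality for this lemma, and the Hedberg-type argument it carries out in Appendix~A for Lemma~\ref{LemmeMixedWeight} reduces exactly to your Steps 1--3 when $\omega\equiv 1$: the near part is bounded by the maximal function, the dyadic tail by H\"older and the Morrey condition, then one chooses $\mathcal{K}=(\|f\|_{\mathcal{M}^{p,q}}/\mathscr{M}(f)(x))^{q/n}$ and concludes with identity (\ref{PuissanceFracMorrey}) and the boundedness of $\mathscr{M}$. Your remark on the identification $\vec{\phi}=\mathcal{I}_\alpha((-\Delta)^{\frac{\alpha}{2}}\vec{\phi})$ addresses a point the paper passes over silently.
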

This estimate is a consequence of the Adams-Hedberg inequality, see \cite{Adams}, \cite{Komori}, \cite{PGLR} and the references therein for more details. Now, if we want to consider a mixed information between unweighted and weighted spaces, we have: 
%%%%%%%%%%%%%%%%%%%%%%%%%%%%%%%%%%%%%%%%%%%%%%%%%%%
\begin{Lemme}[\bf Mixed inequalities]\label{LemmeMixedWeight}
Let $\vec{\phi}:\mathbb{R}^n\longrightarrow \mathbb{R}$ be a function such that $\vec{\phi}\in \dot{W}^{\alpha}_{p,q}\cap  \dot{W}^{\alpha}_{p,q}(\omega)$ with $0<\alpha<n$, $1<p\leq q<+\infty$ and $\alpha q<n$. If $\omega\in A_p$ is a Muckenhoupt weight, then we have the following mixed estimate
$$\|\vec{\phi}\|_{\mathcal{M}^{p^*, q^*}(\omega)}\leq C\|\vec{\phi}\|_{\dot{W}^{\alpha}_{p, q}(\omega)}^{1-\frac{\alpha q}{n}}\|\vec{\phi}\|_{\dot{W}^{\alpha}_{p, q}}^{\frac{\alpha q}{n}},$$
where $p^*=\frac{np}{n-\alpha q}$ and $q^*=\frac{nq}{n-\alpha q}$.
\end{Lemme}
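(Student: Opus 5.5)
We would prove the mixed inequality with a Hedberg-type pointwise interpolation. We mix a weighted maximal function bound with an unweighted Morrey bound, and this produces exactly the exponents $1-\frac{\alpha q}{n}$ and $\frac{\alpha q}{n}$.

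Write $g=(-\Delta)^{\frac{\alpha}{2}}\vec{\phi}$, so that $\vec{\phi}=I_\alpha g$ with $I_\alpha g(x)=c\int_{\mathbb{R}^n}\frac{g(y)}{|x-y|^{n-\alpha}}dy$. By definition, $\|g\|_{\mathcal{M}^{p,q}}=\|\vec{\phi}\|_{\dot{W}^{\alpha}_{p,q}}$ and $\|g\|_{\mathcal{M}^{p,q}(\omega)}=\|\vec{\phi}\|_{\dot{W}^{\alpha}_{p,q}(\omega)}$. Fix $x$ and $\delta>0$, and split $I_\alpha g(x)$ into the near part $|x-y|<\delta$ and the far part $|x-y|\geq\delta$.

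For the near part, we decompose into dyadic annuli as usual, which gives $C\delta^{\alpha}\mathscr{M}(g)(x)$. For the far part, we decompose dyadically over $|x-y|\sim 2^k\delta$ and use Hölder on each ball together with the \emph{unweighted} Morrey norm:
$$\int_{B(x,2^{k+1}\delta)}|g|\leq C(2^k\delta)^{n-\frac{n}{q}}\|g\|_{\mathcal{M}^{p,q}}.$$
Multiplying by $(2^k\delta)^{\alpha-n}$ and summing over $k$, the series converges since $\alpha q<n$. This yields
$$|\vec{\phi}(x)|\leq C\big(\delta^{\alpha}\mathscr{M}(g)(x)+\delta^{\alpha-\frac{n}{q}}\|g\|_{\mathcal{M}^{p,q}}\big).$$
Optimizing in $\delta$ with $\delta^{n/q}=\|g\|_{\mathcal{M}^{p,q}}/\mathscr{M}(g)(x)$ gives the pointwise estimate
$$|\vec{\phi}(x)|\leq C\,\mathscr{M}(g)(x)^{1-\frac{\alpha q}{n}}\,\|g\|_{\mathcal{M}^{p,q}}^{\frac{\alpha q}{n}}.$$

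Next we take the $\mathcal{M}^{p^*,q^*}(\omega)$ norm. Since $p^*(1-\frac{\alpha q}{n})=p$ and $q^*(1-\frac{\alpha q}{n})=q$, the power identity (\ref{PuissanceFracMorrey}) with $s=1-\frac{\alpha q}{n}$ gives
$$\|\mathscr{M}(g)^{1-\frac{\alpha q}{n}}\|_{\mathcal{M}^{p^*,q^*}(\omega)}=\|\mathscr{M}(g)\|_{\mathcal{M}^{p,q}(\omega)}^{1-\frac{\alpha q}{n}}.$$
Because $\omega\in A_p$, the boundedness of $\mathscr{M}$ on $\mathcal{M}^{p,q}(\omega)$ (Komori) bounds this by $C\|g\|_{\mathcal{M}^{p,q}(\omega)}^{1-\frac{\alpha q}{n}}$, which is the claimed inequality.

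The main obstacle is justifying the representation $\vec{\phi}=I_\alpha g$ for $\vec{\phi}$ in these homogeneous spaces, i.e. that no polynomial ambiguity is present. We would handle this by taking it as the definition (modulo polynomials) or by a density argument. The far-part estimate needs only $\alpha q<n$, with $q<+\infty$ and $p>1$ used for Hölder.
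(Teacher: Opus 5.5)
Your proposal is correct and follows essentially the same route as the paper: a Hedberg-type split of the Riesz potential, with the near part bounded by $\delta^{\alpha}\mathscr{M}(g)(x)$ and the far part by the \emph{unweighted} Morrey norm. You then optimize the cutoff and combine the power identity (\ref{PuissanceFracMorrey}) with the boundedness of $\mathscr{M}$ on $\mathcal{M}^{p,q}(\omega)$ for $\omega\in A_p$, exactly as the paper does; the representation $\vec{\phi}=\mathcal{I}_\alpha((-\Delta)^{\frac{\alpha}{2}}\vec{\phi})$ that you flag as the delicate point is simply taken for granted there.
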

%%%%%%%%%%%%%%%%%%%%%%%%%%%%%%%%%%%%%%%%%%%%%%%%%%%
\noindent As far as we are aware and based on our current information, it seems that this weighted/unweighted inequality was not studied before and we could not find a reference for it: we are thus going to sketch its proof in the Appendix \ref{Secc_Weighted_ineq}. Paradoxically, the previous mixed result is simpler to establish than a fully weighted estimate, where we will need the additional ${\bf d}$-lower Ahlfors condition given in (\ref{Ahlfors}). With the help of this condition, we can state the following result: 
%%%%%%%%%%%%%%%%%%%%%%%%%%%%%%%%%%%%%%%%%%%%%%%%%%%
\begin{Lemme}[\bf A fully weighted inequality]\label{LemmeFullyWeighted}
Consider a function $\vec{\phi}:\mathbb{R}^n\longrightarrow \mathbb{R}$ such that $\vec{\phi} \in \dot{W}^{\alpha}_{p,q}(\omega)$, with $0<\alpha<n$, $1<p\leq q<+\infty$, and where $\omega\in A_p$ is a Muckenhoupt weight that satisfies the ${\bf d}$-lower Ahlfors condition (\ref{Ahlfors}) with $\alpha<\frac{{\bf d}}{q}$. Then we have the estimate
$$\|\vec{\phi}\|_{\mathcal{M}^{p^*,q^*}(\omega)}\leq C\|\vec{\phi}\|_{\dot{W}^{\alpha}_{p,q}(\omega)},$$
where $p^*=\frac{{\bf d}p}{({\bf d}-\alpha q)}$ and $q^*=\frac{{\bf d}q}{({\bf d}-\alpha q)}$.
\end{Lemme}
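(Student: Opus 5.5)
We plan to follow the classical Hedberg argument, adapted to the weighted Morrey setting, with the ${\bf d}$-lower Ahlfors condition replacing the homogeneity $|B(x,r)|\sim r^n$. Set $g=(-\Delta)^{\frac{\alpha}{2}}\vec{\phi}\in \mathcal{M}^{p,q}(\omega)$. Then $\vec{\phi}=I_\alpha g$, where $I_\alpha g(x)=c\int_{\mathbb{R}^n}\frac{g(y)}{|x-y|^{n-\alpha}}dy$ is the Riesz potential. It therefore suffices to prove $\|I_\alpha g\|_{\mathcal{M}^{p^*,q^*}(\omega)}\leq C\|g\|_{\mathcal{M}^{p,q}(\omega)}$.

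Fix $x$ and $\delta>0$, and split $I_\alpha g(x)$ into the parts where $|x-y|<\delta$ and $|x-y|\geq\delta$.

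\emph{Local part.} A dyadic decomposition into annuli gives the standard bound $\int_{|x-y|<\delta}\frac{|g(y)|}{|x-y|^{n-\alpha}}dy\leq C\delta^\alpha\mathscr{M}(g)(x)$.

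\emph{Far part.} On each dyadic annulus of radius $2^k\delta$ we bound the integral by $(2^k\delta)^{\alpha-n}\int_{B(x,2^{k+1}\delta)}|g|$. We use Hölder with the weight $\omega^{-\frac{1}{p-1}}$ and then the $A_p$ condition:
$$\int_B|g|\leq \Big(\int_B|g|^p\omega\Big)^{\frac1p}\Big(\int_B\omega^{-\frac{1}{p-1}}\Big)^{\frac{p-1}{p}}\leq C|B|\,\omega(B)^{-\frac1p}\Big(\int_B|g|^p\omega\Big)^{\frac1p}\leq C|B|\,\omega(B)^{-\frac1q}\|g\|_{\mathcal{M}^{p,q}(\omega)}.$$
The Ahlfors condition gives $\omega(B(x,r))^{-1/q}\leq Cr^{-{\bf d}/q}$. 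Each annulus therefore contributes at most $C(2^k\delta)^{\alpha-{\bf d}/q}\|g\|_{\mathcal{M}^{p,q}(\omega)}$. The geometric series converges because $\alpha<{\bf d}/q$, so the far part is bounded by $C\delta^{\alpha-{\bf d}/q}\|g\|_{\mathcal{M}^{p,q}(\omega)}$.

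\emph{Optimization in $\delta$.} Choose $\delta$ so that the two bounds balance: $\delta^{{\bf d}/q}=\|g\|_{\mathcal{M}^{p,q}(\omega)}/\mathscr{M}(g)(x)$. This yields the pointwise estimate
$$|\vec{\phi}(x)|\leq C\,\mathscr{M}(g)(x)^{1-\frac{\alpha q}{{\bf d}}}\,\|g\|_{\mathcal{M}^{p,q}(\omega)}^{\frac{\alpha q}{{\bf d}}}.$$

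\emph{Conclusion.} Take the $\mathcal{M}^{p^*,q^*}(\omega)$ norm and use the identity (\ref{PuissanceFracMorrey}) with $s=1-\frac{\alpha q}{{\bf d}}=p/p^*=q/q^*$:
$$\|\mathscr{M}(g)^{s}\|_{\mathcal{M}^{p^*,q^*}(\omega)}=\|\mathscr{M}(g)\|^{s}_{\mathcal{M}^{p,q}(\omega)}.$$
The boundedness of the maximal function on $\mathcal{M}^{p,q}(\omega)$ for $\omega\in A_p$ (Komori) then gives the claim with exponents $s+\frac{\alpha q}{{\bf d}}=1$.

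The main obstacle is the far-part estimate, specifically the passage from the unweighted integral $\int_B|g|$ to the weighted Morrey norm. This is exactly where both $A_p$ and the lower Ahlfors bound are needed. We must also make sure only the lower bound on $\omega(B)$ is used, since the exponent $-1/q$ is negative. A secondary point is justifying $\vec{\phi}=I_\alpha g$ in this homogeneous setting, which we take as the standard definition of $\dot{W}^\alpha_{p,q}(\omega)$ modulo polynomials.
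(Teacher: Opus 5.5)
Your proposal is correct and follows essentially the same route as the paper. The paper also first proves the Hedberg-type pointwise bound $|\mathcal{I}_\alpha(f)(x)|\leq C\mathscr{M}(f)(x)^{1-\frac{\alpha q}{{\bf d}}}\|f\|_{\mathcal{M}^{p,q}(\omega)}^{\frac{\alpha q}{{\bf d}}}$ (Lemma \ref{LemmaPointwiseSubrepresentation}), splitting into dyadic annuli and controlling the far part with the same $A_p$--H\"older average inequality and the lower Ahlfors bound on $\omega(B)^{-1/q}$; it then concludes, as you do, with the identity (\ref{PuissanceFracMorrey}) at $s=1-\frac{\alpha q}{{\bf d}}$ and the boundedness of $\mathscr{M}$ on $\mathcal{M}^{p,q}(\omega)$.
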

%%%%%%%%%%%%%%%%%%%%%%%%%%%%%%%%%%%%%%%%%%%%%%%%%%%
Let us point out that the ${\bf d}$-lower Ahlfors condition is used here as a technical assumption. Whether this hypothesis can be removed or replaced by a weaker one remains open. Remark that a similar and more restrictive condition was used in \cite{Guliyev} and \cite{Gunawan} (see also the references therein) to derive analogous inequalities in a slightly different setting. Since this estimate was not treated before, we will present a proof in the appendix at the end of this article.\\

To conclude, we will need the following results on functional inequalities involving rough singular integral operators. 
%%%%%%%%%%%%%%%%%%%%%%%%%%%%%%%%%%%%%%%%%%%%%%%%%%%
\begin{Lemme}[\bf Unweighted, mixed and fully weighted Sobolev inequalities for rough operators]\label{LemmaRough} Consider $\mathcal{T}$  a rough singular integral operator of the form given in (\ref{Def_Operator}) associated to the kernel $\Omega \in L^\rho(\mathbb{S}^{n-1})$ with $1<\rho<n$. 
\begin{itemize}
\item[1)] {\bf Unweigthed inequalities}: if $\vec{\phi}:\mathbb{R}^n\longrightarrow \mathbb{R}$ is a function such that $\vec{\phi}\in \dot{W}^1_{\frac{p}{2}, \frac{n}{2}}$ with $1<\frac{\rho n}{\rho n+\rho-n}< p<n$, then we have the estimate
$$\|\mathcal{T}(\vec{\phi})\|_{\mathcal{M}^{p,n}}\leq C\|\vec{\phi}\|_{\dot{W}^1_{\frac{p}{2}, \frac{n}{2}}}.$$
\item[2)] {\bf Mixed inequalities}: if $\vec{\phi}:\mathbb{R}^n\longrightarrow \mathbb{R}$ is a function such that we have $\vec{\phi}\in \dot{W}^1_{\frac{p}{2}, \frac{n}{2}}\cap\dot{W}^1_{\frac{p}{2}, \frac{n}{2}}(\omega)$ with $1<\frac{\rho n}{\rho n+\rho-n}< p<n$ and where $\omega$ is a Muckenhoupt weight such that $\omega\in A_p$, then we have the estimate
$$\|\mathcal{T}(\vec{\phi})\|_{\mathcal{M}^{p,n}}\leq C\|\vec{\phi}\|_{\dot{W}^1_{\frac{p}{2}, \frac{n}{2}}}^{\frac{1}{2}}\|\vec{\phi}\|_{\dot{W}^1_{\frac{p}{2}, \frac{n}{2}}(\omega)}^{\frac{1}{2}}.$$
\item[3)] {\bf Fully weighted inequalities}: assume that $\vec{\phi}:\mathbb{R}^n\longrightarrow \mathbb{R}$ is a function such that  $\vec{\phi}\in \dot{W}^1_{p, q}(\omega)$ with $1<\frac{\rho n}{\rho n+\rho-n}< p<n$ and $1<p\leq q<+\infty$. Consider a Muckenhoupt weight $\omega$ that satisfies the ${\bf d}$-lower Ahlfors condition (\ref{Ahlfors}) with $1<\frac{\bf d}{q}$, then we have 
$$\|\mathcal{T}(\vec{\phi})\|_{\mathcal{M}^{p^*,q^*}(\omega)}\leq C\|\vec{\phi}\|_{\dot{W}^1_{p,q}(\omega)},$$
where $p^*=\frac{{\bf d}p}{({\bf d}-q)}$ and $q^*=\frac{{\bf d}q}{({\bf d}-q)}$.
\end{itemize}
\end{Lemme}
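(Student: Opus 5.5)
The plan is to factor $\mathcal{T}$ through a fractional integral and then apply Lemma \ref{LemmeNoWeight}, Lemma \ref{LemmeMixedWeight} and Lemma \ref{LemmeFullyWeighted}. We write $\vec{\phi}=I_1(-\Delta)^{\frac12}\vec{\phi}$, where $I_1=(-\Delta)^{-\frac12}$ is the Riesz potential of order one. Since $\mathcal{T}$ is a convolution operator, it commutes with $I_1$. This gives $\mathcal{T}(\vec{\phi})=\mathcal{T}I_1(g)$ with $g=(-\Delta)^{\frac12}\vec{\phi}\in\mathcal{M}^{\frac p2,\frac n2}$ (respectively $\mathcal{M}^{p,q}(\omega)$).

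\textbf{Step 1 (kernel estimate).} The composition $\mathcal{T}I_1$ is convolution with a kernel $K(x)=|x|^{1-n}\,\widetilde{\Omega}(x/|x|)$, homogeneous of degree $1-n$. Here $\widetilde{\Omega}$ is built from $\Omega$, and using only $\Omega\in L^{\rho}(\mathbb{S}^{n-1})$ one should get $\widetilde{\Omega}\in L^{\rho}(\mathbb{S}^{n-1})$, or at least a pointwise bound of the same type. By H\"older on dyadic annuli one then gets a pointwise control of rough-maximal type:
$$|\mathcal{T}I_1 g(x)|\lesssim \sum_{k\in\mathbb{Z}}2^{k}\Big(\frac{1}{|B(x,2^k)|}\int_{B(x,2^k)}|g|^{\rho'}\Big)^{\frac1{\rho'}}.$$
The integrability needs $\rho'$ smaller than the Lebesgue exponent of $g$. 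In the unweighted case that exponent is $p/2$, and the constraint $p>\frac{\rho n}{\rho n+\rho-n}$ is what should make this admissible. This step is essentially contained in \cite{ChMarcociMarcoci}, which I would quote.

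\textbf{Step 2 (Hedberg splitting).} Split the sum at a radius $R$. For small $k$, bound the mean by $\mathscr{M}_{\rho'}g(x)$, the $\rho'$-maximal function. For large $k$, use the Morrey norm, which gives decay $2^{k(1-n/q)}$. Optimising in $R$ yields $|\mathcal{T}I_1g|\lesssim (\mathscr{M}_{\rho'}g)^{1-\frac{q}{n}}\|g\|^{\frac qn}_{\mathcal{M}^{\cdot,q}}$. Taking Morrey norms and using the boundedness of $\mathscr{M}_{\rho'}$ on Morrey spaces with exponent above $\rho'$ gives item 1). This is exactly the mechanism behind Lemma \ref{LemmeNoWeight} with $\alpha=1$, $q=n/2$, so $p^*=p$ and $q^*=n$.

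For item 2), keep the tail estimate in the unweighted norm, but take the local maximal part in the weighted space. Boundedness of $\mathscr{M}_{\rho'}$ on $\mathcal{M}^{\frac p2,\frac n2}(\omega)$ requires $\omega\in A_{p/(2\rho')}$. I would verify the index bookkeeping, and if necessary absorb this into the hypothesis through the class inclusions. This produces the product of powers $\frac12,\frac12$ (since $\frac{\alpha q}{n}=\frac12$), as in Lemma \ref{LemmeMixedWeight}.

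For item 3), in the large-$k$ part replace $|B(x,2^k)|$ by $\omega(B(x,2^k))$ through the $A_p$ comparison. The Ahlfors lower bound $\omega(B)\gtrsim r^{\bf d}$ then gives decay $2^{k(1-{\bf d}/q)}$, which is summable precisely when $q<{\bf d}$. After that we conclude as in Lemma \ref{LemmeFullyWeighted}.

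\textbf{Main obstacle.} The delicate step is Step 1 together with the weighted maximal bound. We must show that the rough kernel composed with $I_1$ satisfies an $L^{\rho}$-angular size condition, and we must check that the $\rho'$-maximal operator is bounded on the weighted Morrey space under the stated $A_p$ hypothesis. If the index bookkeeping fails, I would instead appeal directly to the estimates of \cite{ChMarcociMarcoci} for $\mathcal{T}$ on (weighted) Morrey spaces and compose them with the Sobolev-type Lemmas \ref{LemmeNoWeight}--\ref{LemmeFullyWeighted}, using the commutation $\mathcal{T}I_1=I_1\mathcal{T}$.
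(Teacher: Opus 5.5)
\paragraph{A genuine gap in the exponents.} The paper gives no argument of its own for this lemma; it refers to \cite{ChMarcociMarcoci} and \cite{ChMarcociMarcoci1}. Your fallback (quote Morrey bounds for $\mathcal{T}$ from there and compose with Lemmas \ref{LemmeNoWeight}--\ref{LemmeFullyWeighted}) is close to that. Your main route, however, fails on most of the stated range.

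Step 1 controls $\mathcal{T}I_1 g$ by $\rho'$-averages of $g=(-\Delta)^{\frac12}\vec{\phi}$. So Step 2 needs $\mathscr{M}_{\rho'}$ bounded on $\mathcal{M}^{\frac p2,\frac n2}$, i.e.\ $\rho'<\frac p2$. Otherwise $g$ need not be locally in $L^{\rho'}$ and the bound is vacuous: for $g=|x|^{-2}\chi_{B(0,1)}\in\mathcal{M}^{\frac p2,\frac n2}$ with $2\rho'\geq n$, one has $\mathscr{M}_{\rho'}g\equiv+\infty$.

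You assert that $p>\frac{\rho n}{\rho n+\rho-n}$ makes this admissible. But that hypothesis is exactly $\frac1p<\frac1{\rho'}+\frac1n$, whereas you need $\frac1p<\frac1{2\rho'}$, which is strictly stronger. For $n=3$ it never holds: $\rho<3$ forces $\rho'>\frac32$, so $2\rho'>3>p$. For instance, $(n,\rho,p)=(3,2,\frac52)$ satisfies every hypothesis of 1), and also $2<p<n$ as in Theorem \ref{Theorem_Existence}, yet your argument gives nothing there. In general your method needs $\rho>\frac{n}{n-2}$ and only reaches $p\in(2\rho',n)$.

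\paragraph{The weighted items and Step 1.} Items 2) and 3) inherit the same defect.
In 2) you need $\mathscr{M}_{\rho'}$ bounded on $\mathcal{M}^{\frac p2,\frac n2}(\omega)$, hence $\omega\in A_{p/(2\rho')}$.
In 3) the maximal part and the tail both need $\rho'<p$ and $\omega\in A_{p/\rho'}$, since the $A_p$-comparison must be applied to $|g|^{\rho'}$ with exponent $p/\rho'$.
These cannot be obtained ``through the class inclusions'', which run the other way: $A_{p/(2\rho')}\subset A_{p/2}\subset A_p$. So $\omega\in A_p$ yields none of them, and you would have to add them as hypotheses.

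Finally, Step 1 only asserts that the angular part $\widetilde{\Omega}$ of the kernel of $\mathcal{T}(-\Delta)^{-\frac12}$ lies in $L^{\rho}(\mathbb{S}^{n-1})$, and this is the crux of your approach. The claim is true: on degree-$k$ spherical harmonics the map $\Omega\mapsto\widetilde{\Omega}$ acts by factors of size $\asymp k^{-1}$, so $\widetilde{\Omega}$ even gains a derivative. But it must be proved. Even with that gain, your pointwise scheme still requires $\frac2p<\frac1{\rho'}+\frac1{n-1}$ when $\rho<n-1$, which falls short of the stated range.

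As written, the proposal establishes a weaker lemma than the statement: it needs $p>2\rho'$ (resp.\ $p>\rho'$) and more restrictive weight classes.
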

%%%%%%%%%%%%%%%%%%%%%%%%%%%%%%%%%%%%%%%%%%%%%%%%%%%
\noindent These inequalities are studied in a slightly more general manner (and for a wider range of parameters) in \cite{ChMarcociMarcoci} and \cite{ChMarcociMarcoci1}. 
%%%%%%%%%%%%%%%%%%%%%%%%%%%%%%%%%%%%%%%%%%%%%%%%%%%
\mysection{Proof of the Theorem \ref{Theorem_Existence}}\label{Secc_Theo1}
To begin, we recall that the Leray projector $\mathbb{P}$ is defined by the expression $\mathbb{P}(\vec{\phi})=\vec{\phi}+\vn (-\Delta)^{-1}div(\vec{\phi})$. In particular, if $div(\vu)=0$ we have $\mathbb{P}(\vu)=\vu$, moreover we have $\mathbb{P}(\vn \pi)=0$ (see \cite{Chamorro_Livre} for more properties of the Leray projector). Thus, if we apply this projector to the equation (\ref{Equation_Intro}) we obtain the expression
$$\Delta \vu=\mathbb{P}(div(\mathcal{T}(\vu)\otimes \vu))-\mathbb{P}(\vf),$$
which can be rewritten in the following manner
\begin{equation}\label{EquaPointFixe}
\vu=(-\Delta)^{-1}\mathbb{P}(\vf)-(-\Delta)^{-1}\mathbb{P}(div(\mathcal{T}(\vu)\otimes \vu)).
\end{equation}
Since the quantity $(-\Delta)^{-1}\mathbb{P}(div(\mathcal{T}(\vu)\otimes \vu))$ is bilinear, we can apply the Banach-Picard contraction principle given by the following result: 
%%%%%%%%%%%%%%%%%%%%%%%%%%%%%%%%%%%%%%%%%%%%%%%%%%%
\begin{Lemme}\label{LemmeBP}
Let $(E, \|{\cdot}\|_{E})$ be a Banach space and let $B:E\times E \longrightarrow E$ be a bilinear application such that for all $e,f\in E$ we have the control
$$\|B(e,f)\|_{E}\leq C_{B}\|e\|_{E}\|f\|_{E}.$$
If $e_{0}\in E$ is such that $\|e_{0}\|_{E}\leq \delta$ and if we have $0<\delta<\frac{1}{4 C_{B}}$,
then the equation
$$e=e_{0}-B(e,e),$$
admits a solution $e\in E$ such that $\|e\|_{E}\leq 2 \delta$.
\end{Lemme}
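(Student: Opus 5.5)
The plan is to run the classical Picard iteration in the closed ball $\overline{B}(0,2\delta)$ of $E$, and to obtain the solution as a fixed point of the map $\Phi(e)=e_0-B(e,e)$. Since $E$ is a Banach space, this ball is complete, and the contraction mapping principle will then give existence; uniqueness inside the ball comes for free.

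First, $\Phi$ maps the ball into itself. If $\|e\|_E\leq 2\delta$, the bilinear control gives
$$\|\Phi(e)\|_E\leq \|e_0\|_E+C_B\|e\|_E^2\leq \delta+4C_B\delta^2=\delta(1+4C_B\delta)<2\delta,$$
because $4C_B\delta<1$.

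Second, $\Phi$ is a contraction on the ball. For $e,f$ in the ball, bilinearity lets me write $B(e,e)-B(f,f)=B(e-f,e)+B(f,e-f)$. Hence
$$\|\Phi(e)-\Phi(f)\|_E\leq C_B\big(\|e\|_E+\|f\|_E\big)\|e-f\|_E\leq 4C_B\delta\,\|e-f\|_E,$$
and the constant $4C_B\delta$ is strictly less than $1$.

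Finally, I apply the Banach fixed point theorem. Alternatively, I show directly that the iterates $e_{k+1}=\Phi(e_k)$, starting from $e_0$, form a Cauchy sequence: they stay in the ball, and consecutive differences decay geometrically with ratio $4C_B\delta$. Their limit $e$ satisfies $e=e_0-B(e,e)$ by continuity of $B$, which follows from the bilinear bound, and $\|e\|_E\leq 2\delta$.

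The argument has no real obstacle. The only point needing care is the algebraic splitting $B(e,e)-B(f,f)=B(e-f,e)+B(f,e-f)$, which is where bilinearity (not symmetry) is used. The other point is checking that the smallness $\delta<\frac{1}{4C_B}$ delivers both the stability of the ball and the strict contraction.
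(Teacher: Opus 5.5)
Your proof is correct: the self-map bound $\delta(1+4C_B\delta)<2\delta$, the splitting $B(e,e)-B(f,f)=B(e-f,e)+B(f,e-f)$ giving contraction constant $4C_B\delta<1$, and completeness of the closed ball are exactly what is needed. The paper gives no proof of its own and only refers to the external reference, where the standard argument is this same Picard iteration in the ball of radius $2\delta$, so your approach essentially matches.
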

%%%%%%%%%%%%%%%%%%%%%%%%%%%%%%%%%%%%%%%%%%%%%%%%%%%
 See \cite[Théorème 4.1.1]{Chamorro_Livre} for a proof of this fact. Thus, in order to obtain a solution to the previous equation (\ref{EquaPointFixe}), we only need to fix a resolution space and obtain some suitable estimates.  Indeed, as announced in the Theorem \ref{Theorem_Existence}, we will consider as resolution space the weighted Sobolev space $E=\dot{W}^{1}_{\frac{p}{2}, \frac{n}{2}}\cap \dot{W}^{1}_{\frac{p}{2}, \frac{n}{2}}(\omega)$ with some index $p$ such that $2<p< n$, which will be endowed with the norm $\|\cdot\|_{E}=\|\cdot\|_{\dot{W}^{1}_{\frac{p}{2}, \frac{n}{2}}}+\|\cdot\|_{ \dot{W}^{1}_{\frac{p}{2}, \frac{n}{2}}(\omega)}$,  and we will need to prove the following inequalities:
\begin{equation}\label{EstimationForce}
\|(-\Delta)^{-1}\mathbb{P}(\vf)\|_{E}\leq \delta,
\end{equation}
and 
\begin{equation}\label{EstimationBilineaire}
\|(-\Delta)^{-1}\mathbb{P}(div(\mathcal{T}(\vu)\otimes \vu))\|_{E}\leq C\|\vu\|_{E}\|\vu\|_{E}.
\end{equation}
These two estimates will be studied separately in the lines below.\\
%%%%%%%%%%%%%%%%%%%%%%%%%%%%%%%%%%%%%%%%%%%%%%%%%%%
\begin{itemize}
\item For the external force $\vf$, we start by writing 
$$\|(-\Delta)^{-1}\mathbb{P}(\vf)\|_{E}=\underbrace{\|(-\Delta)^{-1}\mathbb{P}(\vf)\|_{\dot{W}^{1}_{\frac{p}{2}, \frac{n}{2}}}}_{(a)}+\underbrace{\|(-\Delta)^{-1}\mathbb{P}(\vf)\|_{ \dot{W}^{1}_{\frac{p}{2}, \frac{n}{2}}(\omega)}}_{(b)}.$$
For the first quantity $(a)$ above, by the definition of the Sobolev spaces given in the expression (\ref{Def_SobolevMorrey}) and by the commutation properties of the Leray projector $\mathbb{P}$ with the fractional powers of the Laplacian, we have
$$\|(-\Delta)^{-1}\mathbb{P}(\vf)\|_{\dot{W}^{1}_{\frac{p}{2}, \frac{n}{2}}}=\|(-\Delta)^{\frac{1}{2}}(-\Delta)^{-1}\mathbb{P}(\vf)\|_{\mathcal{M}^{\frac{p}{2}, \frac{n}{2}}}=\|\mathbb{P}((-\Delta)^{-\frac{1}{2}}\vf)\|_{\mathcal{M}^{\frac{p}{2}, \frac{n}{2}}}.$$
Now, using the fact that the Leray projector is bounded in the Morrey space $\mathcal{M}^{\frac{p}{2},\frac{n}{2}}$ (since we have $2<p<n$, see \cite[Proposition 6.2]{PGLR}),  we obtain
$$\|(-\Delta)^{-1}\mathbb{P}(\vf)\|_{\dot{W}^{1}_{\frac{p}{2}, \frac{n}{2}}}\leq C\|(-\Delta)^{-\frac{1}{2}}\vf\|_{\mathcal{M}^{\frac{p}{2}, \frac{n}{2}}}=C\|\vf\|_{\dot{W}^{-1}_{\frac{p}{2}, \frac{n}{2}}}<+\infty,$$
which is finite by the hypothesis over the external force.\\

For the second quantity $(b)$, by the same arguments as above we can write 
$$\|(-\Delta)^{-1}\mathbb{P}(\vf)\|_{\dot{W}^{1}_{\frac{p}{2}, \frac{n}{2}}(\omega)}=\|\mathbb{P}((-\Delta)^{\frac{1}{2}}(-\Delta)^{-1}\vf)\|_{\mathcal{M}^{\frac{p}{2}, \frac{n}{2}}(\omega)}=\|\mathbb{P}((-\Delta)^{-\frac{1}{2}}\vf)\|_{\mathcal{M}^{\frac{p}{2}, \frac{n}{2}}(\omega)}.$$
Since the Leray projector is based on the Riesz transforms (recall that $\mathbb{P}(\vec{\phi})=\vec{\phi}+\vn (-\Delta)^{-1}div(\vec{\phi})$), and since the Riesz transforms are bounded in the weighted Morrey space $\mathcal{M}^{\frac{p}{2}, \frac{n}{2}}(\omega)$ if we have that $\omega \in A_{\frac{p}{2}}$ (see \cite[Theorem 3.3]{Komori}), then we obtain the estimate
$$\|(-\Delta)^{-1}\mathbb{P}(\vf)\|_{\dot{W}^{1}_{\frac{p}{2}, \frac{n}{2}}(\omega)}\leq C\|(-\Delta)^{-\frac{1}{2}}\vf\|_{\mathcal{M}^{\frac{p}{2}, \frac{n}{2}}(\omega)}=C\|\vf\|_{\dot{W}^{-1}_{\frac{p}{2}, \frac{n}{2}}(\omega)}<+\infty.$$
With these two estimates at hand, we finally obtain the control 
\begin{equation}\label{EstimationForceExterieureMild}
\|(-\Delta)^{-1}\mathbb{P}(\vf)\|_{E}\leq C(\|\vf\|_{\dot{W}^{-1}_{\frac{p}{2}, \frac{n}{2}}}+\|\vf\|_{\dot{W}^{-1}_{\frac{p}{2}, \frac{n}{2}}(\omega)}).
\end{equation}
and this last quantity can be made small since we control the information available over the external force $\vf$.
%%%%%%%%%%%%%%%%%%%%%%%%%%%%%%%%%%%%%%%%%%%%%%%%%%%
\item For the bilinear term, by the definition of the norm $\|\cdot\|_E$, we have
\begin{eqnarray*}
\|(-\Delta)^{-1}\mathbb{P}(div(\mathcal{T}(\vu)\otimes \vu))\|_{E}&=&\|(-\Delta)^{-1}\mathbb{P}( div(\mathcal{T}(\vu)\otimes \vu))\|_{\dot{W}^{1}_{\frac{p}{2}, \frac{n}{2}}}\\
&&+\|(-\Delta)^{-1}\mathbb{P}( div(\mathcal{T}(\vu)\otimes \vu))\|_{\dot{W}^{1}_{\frac{p}{2}, \frac{n}{2}}(\omega)},
\end{eqnarray*}
and by the commutation properties of the Leray projector $\mathbb{P}$, we write
\begin{eqnarray*}
&=&\|(-\Delta)^{\frac{1}{2}}(-\Delta)^{-1}\mathbb{P}( div(\mathcal{T}(\vu)\otimes \vu))\|_{\mathcal{M}^{\frac{p}{2}, \frac{n}{2}}}+\|(-\Delta)^{\frac{1}{2}}(-\Delta)^{-1}\mathbb{P}( div(\mathcal{T}(\vu)\otimes \vu))\|_{\mathcal{M}^{\frac{p}{2}, \frac{n}{2}}(\omega)}\\
&=&\|\mathbb{P}( (-\Delta)^{\frac{1}{2}}(-\Delta)^{-1}div(\mathcal{T}(\vu)\otimes \vu))\|_{\mathcal{M}^{\frac{p}{2}, \frac{n}{2}}}+\|\mathbb{P}( (-\Delta)^{\frac{1}{2}}(-\Delta)^{-1}div(\mathcal{T}(\vu)\otimes \vu))\|_{\mathcal{M}^{\frac{p}{2}, \frac{n}{2}}(\omega)}\\
&\leq &C\|(-\Delta)^{-\frac{1}{2}}div(\mathcal{T}(\vu)\otimes \vu)\|_{\mathcal{M}^{\frac{p}{2}, \frac{n}{2}}}+C\|(-\Delta)^{-\frac{1}{2}}div(\mathcal{T}(\vu)\otimes \vu)\|_{\mathcal{M}^{\frac{p}{2}, \frac{n}{2}}(\omega)}
\end{eqnarray*}
since, by the same arguments as above, the Leray projector is bounded in these spaces. Now, recalling that the Riesz transforms are bounded in the spaces $\mathcal{M}^{\frac{p}{2}, \frac{n}{2}}$ and $\mathcal{M}^{\frac{p}{2}, \frac{n}{2}}(\omega)$, we obtain 
\begin{eqnarray*}
\|(-\Delta)^{-1}\mathbb{P}(div(\mathcal{T}(\vu)\otimes \vu))\|_{E}&\leq &C\|(-\Delta)^{-\frac{1}{2}}div(\mathcal{T}(\vu)\otimes \vu)\|_{\mathcal{M}^{\frac{p}{2}, \frac{n}{2}}}\\
&&+C\|(-\Delta)^{-\frac{1}{2}}div(\mathcal{T}(\vu)\otimes \vu)\|_{\mathcal{M}^{\frac{p}{2}, \frac{n}{2}}(\omega)}
\\
&\leq & C\|\mathcal{T}(\vu)\otimes \vu\|_{\mathcal{M}^{\frac{p}{2}, \frac{n}{2}}}+C\|\mathcal{T}(\vu)\otimes \vu\|_{\mathcal{M}^{\frac{p}{2}, \frac{n}{2}}(\omega)}.
\end{eqnarray*}
now by the H\"older inequality in the Morrey spaces with $\frac{2}{p}=\frac{1}{p}+\frac{1}{p}$ and $\frac{2}{n}=\frac{1}{n}+\frac{1}{n}$, we have:
\begin{equation}\label{EstimationPointFixeBilineaire1}
\|(-\Delta)^{-1}\mathbb{P}(div(\mathcal{T}(\vu)\otimes \vu))\|_{E}\leq \|\mathcal{T}(\vu)\|_{\mathcal{M}^{p,n}} \|\vu\|_{\mathcal{M}^{p,n}}+C\|\mathcal{T}(\vu)\|_{\mathcal{M}^{p,n}(\omega)}\|\vu\|_{\mathcal{M}^{p,n}(\omega)}.
\end{equation}
At this point we use the first point of the Lemma \ref{LemmaRough} to obtain the estimates 
$$\|\mathcal{T}(\vu)\|_{\mathcal{M}^{p,n}}\leq C\|\vu\|_{\dot{W}^1_{\frac{p}{2},\frac{n}{2}}},$$ 
as well as 
$$\|\mathcal{T}(\vu)\|_{\mathcal{M}^{p,n}(\omega)}\leq C\|(-\Delta)^{\frac{1}{2}}\vu\|_{\mathcal{M}^{\frac{p}{2},\frac{n}{2}}}^{\frac{1}{2}}\|(-\Delta)^{\frac{1}{2}}\vu\|_{\mathcal{M}^{\frac{p}{2},\frac{n}{2}}(\omega)}^{\frac{1}{2}}=C\|\vu\|_{\dot{W}^1_{\frac{p}{2},\frac{n}{2}}}^{\frac{1}{2}}\|\vu\|_{\dot{W}^1_{\frac{p}{2},\frac{n}{2}}(\omega)}^{\frac{1}{2}},$$
and if we apply the Lemma \ref{LemmeNoWeight} and the Lemma \ref{LemmeMixedWeight}, we have 
$$\|\vu\|_{\mathcal{M}^{p,n}}\leq C\|\vu\|_{\dot{W}^1_{\frac{p}{2},\frac{n}{2}}}\qquad \mbox{and}\qquad \|\vu\|_{\mathcal{M}^{p,n}(\omega)}\leq \|\vu\|_{\dot{W}^1_{\frac{p}{2},\frac{n}{2}}}^{\frac{1}{2}}\|\vu\|_{\dot{W}^1_{\frac{p}{2},\frac{n}{2}}(\omega)}^{\frac{1}{2}}.$$
With all these inequalities at hand, coming back to the expression (\ref{EstimationPointFixeBilineaire1}), we can write 
\begin{eqnarray*}
\|(-\Delta)^{-1}\mathbb{P}(div(\mathcal{T}(\vu)\otimes \vu))\|_{E}&\leq &C\|\vu\|_{\dot{W}^1_{\frac{p}{2},\frac{n}{2}}}\|\vu\|_{\dot{W}^1_{\frac{p}{2},\frac{n}{2}}}\\
&&+C\left(\|\vu\|_{\dot{W}^1_{\frac{p}{2},\frac{n}{2}}}^{\frac{1}{2}}\|\vu\|_{\dot{W}^1_{\frac{p}{2},\frac{n}{2}}(\omega)}^{\frac{1}{2}}\right)\left(\|\vu\|_{\dot{W}^1_{\frac{p}{2},\frac{n}{2}}}^{\frac{1}{2}}\|\vu\|_{\dot{W}^1_{\frac{p}{2},\frac{n}{2}}(\omega)}^{\frac{1}{2}}\right).
\end{eqnarray*}
Since $\|\vu\|_{\dot{W}^1_{\frac{p}{2},\frac{n}{2}}}\leq \|\vu\|_{\dot{W}^1_{\frac{p}{2},\frac{n}{2}}}+\|\vu\|_{\dot{W}^1_{\frac{p}{2},\frac{n}{2}}(\omega)}=\|\vu\|_{E}$ and since, by the Young inequalities for the sum we have $\|\vu\|_{\dot{W}^1_{\frac{p}{2},\frac{n}{2}}}^{\frac{1}{2}}\|\vu\|_{\dot{W}^1_{\frac{p}{2},\frac{n}{2}}(\omega)}^{\frac{1}{2}}\leq \|\vu\|_{\dot{W}^1_{\frac{p}{2},\frac{n}{2}}}+\|\vu\|_{\dot{W}^1_{\frac{p}{2},\frac{n}{2}}(\omega)}=\|\vu\|_E$, we can write 
\begin{eqnarray*}
\|(-\Delta)^{-1}\mathbb{P}(div(\mathcal{T}(\vu)\otimes \vu))\|_{E}&\leq &C\|\vu\|_{\dot{W}^1_{\frac{p}{2},\frac{n}{2}}}\|\vu\|_{\dot{W}^1_{\frac{p}{2},\frac{n}{2}}}\\
&&+C\left(\|\vu\|_{\dot{W}^1_{\frac{p}{2},\frac{n}{2}}}^{\frac{1}{2}}\|\vu\|_{\dot{W}^1_{\frac{p}{2},\frac{n}{2}}(\omega)}^{\frac{1}{2}}\right)\left(\|\vu\|_{\dot{W}^1_{\frac{p}{2},\frac{n}{2}}}^{\frac{1}{2}}\|\vu\|_{\dot{W}^1_{\frac{p}{2},\frac{n}{2}}(\omega)}^{\frac{1}{2}}\right)\\
&\leq & C\|\vu\|_E\|\vu\|_E+C\|\vu\|_E\|\vu\|_E,
\end{eqnarray*}
from which we easily deduce the expression 
\begin{equation}\label{EstimationTermeBilineaireMild}
\|(-\Delta)^{-1}\mathbb{P}(div(\mathcal{T}(\vu)\otimes \vu))\|_{E}\leq C\|\vu\|_E\|\vu\|_E,
\end{equation}
which proves the boundedness of the bilinear term in the functional space $E$.
\end{itemize}
With the estimates (\ref{EstimationForceExterieureMild}) and (\ref{EstimationTermeBilineaireMild}), to apply the Banach-Picard contraction principle we only need to assume the smallness of the external force in the sense that we should have $\|\vf\|_{\dot{W}^{-1}_{\frac{p}{2}, \frac{n}{2}}}+\|\vf\|_{\dot{W}^{-1}_{\frac{p}{2}, \frac{n}{2}}(\omega)}\leq \epsilon$, for some constant $0<\epsilon\ll 1$ small enough in order to close the fixed point argument given by the Lemma \ref{LemmeBP}: we thus obtain a function $\vu\in \dot{W}^{1}_{\frac{p}{2}, \frac{n}{2}}\cap \dot{W}^{1}_{\frac{p}{2}, \frac{n}{2}}(\omega)$ which is a solution to the equation (\ref{EquaPointFixe}).\\

To continue, we study the pressure $\pi$: due to the divergence-free property of the velocity field we can write
$$\pi=(-\Delta)^{-1}\left(div(div (\mathcal{T}(\vu)\otimes\vu))\right)-(-\Delta)^{-1}div(\vf),$$
thus, applying the $\mathcal{M}^{\frac{p}{2},\frac{n}{2}}$ norm to this expression we can write (since the Riesz transforms are bounded in the Morrey space $\mathcal{M}^{\frac{p}{2},\frac{n}{2}}$):
\begin{eqnarray*}
\|\pi\|_{\mathcal{M}^{\frac{p}{2},\frac{n}{2}}}&\leq &\|(-\Delta)^{-1}(div(div (\mathcal{T}(\vu)\otimes\vu)))\|_{\mathcal{M}^{\frac{p}{2},\frac{n}{2}}}+\|(-\Delta)^{-1}div(\vf)\|_{\mathcal{M}^{\frac{p}{2},\frac{n}{2}}}\\
&\leq & C\|\mathcal{T}(\vu)\otimes\vu\|_{\mathcal{M}^{\frac{p}{2},\frac{n}{2}}}+C\|\vf\|_{\dot{W}^{-1}_{\frac{p}{2}, \frac{n}{2}}}.
\end{eqnarray*}
By the previous computations we know that the term $\|\mathcal{T}(\vu)\otimes\vu\|_{\mathcal{M}^{\frac{p}{2},\frac{n}{2}}}$ is bounded (since it is controlled by the quantity $\|\vu\|_{\dot{W}^{1}_{\frac{p}{2},\frac{n}{2}}}$) and by hypothesis we have that $\|\vf\|_{\dot{W}^{-1}_{\frac{p}{2}, \frac{n}{2}}}<+\infty$, from which we deduce that the pressure $\pi$ belongs to the space $\mathcal{M}^{\frac{p}{2},\frac{n}{2}}$. We consider now the space $\mathcal{M}^{\frac{p}{2},\frac{n}{2}}(\omega)$ and we write 
\begin{eqnarray*}
\|\pi\|_{\mathcal{M}^{\frac{p}{2},\frac{n}{2}}(\omega)}&\leq &\|(-\Delta)^{-1}(div(div (\mathcal{T}(\vu)\otimes\vu)))\|_{\mathcal{M}^{\frac{p}{2},\frac{n}{2}}(\omega)}+\|(-\Delta)^{-1}div(\vf)\|_{\mathcal{M}^{\frac{p}{2},\frac{n}{2}}(\omega)}\\
&\leq & C\|\mathcal{T}(\vu)\otimes\vu\|_{\mathcal{M}^{\frac{p}{2},\frac{n}{2}}(\omega)}+C\|\vf\|_{\dot{W}^{-1}_{\frac{p}{2}, \frac{n}{2}}(\omega)}.
\end{eqnarray*}
As before, following the previous computations, the term $\|\mathcal{T}(\vu)\otimes\vu\|_{\mathcal{M}^{\frac{p}{2},\frac{n}{2}}(\omega)}$ can be controlled by the norms $\|\vu\|_{\dot{W}^{1}_{\frac{p}{2},\frac{n}{2}}}$ and $\|\vu\|_{\dot{W}^{1}_{\frac{p}{2},\frac{n}{2}}(\omega)}$ and the quantity $\|\vf\|_{\dot{W}^{-1}_{\frac{p}{2}, \frac{n}{2}}(\omega)}$ is bounded by hypothesis, and from this we deduce that $\pi \in \mathcal{M}^{\frac{p}{2},\frac{n}{2}}(\omega)$.\\

By combining these results, we obtain that the pressure $\pi$ belongs to the space $\mathcal{M}^{\frac{p}{2},\frac{n}{2}}\cap \mathcal{M}^{\frac{p}{2},\frac{n}{2}}(\omega)$ and this ends the proof of the Theorem \ref{Theorem_Existence}. \hfill $\blacksquare$\\
%%%%%%%%%%%%%%%%%%%%%%%%%%%%%%%%%%%%%%%%%%%%%%%%%%%

\noindent As announced in the introduction, if we set $\omega \equiv 1$, we obtain a usual (unweighted) existence result: 
%%%%%%%%%%%%%%%%%%%%%%%%%%%%%%%%%%%%%%%%%%%%%%%%%%%
\begin{Corollaire}[\bf Unweighted existence]\label{CoroSimple}
Over the Euclidean space $\mathbb{R}^n$ with $n\geq 3$, let $\vf:\mathbb{R}^n\longrightarrow \mathbb{R}^n$ be an external force such that the quantity $\|\vf\|_{\dot{W}^{-1}_{\frac{p}{2}, \frac{n}{2}}}$ is small enough, then there exists one solution $(\vu, \pi)$ of the stationary Navier-Stokes equations \eqref{Equation_Intro} such that we have $\vu\in \dot{W}^1_{\frac{p}{2},\frac{n}{2}}$ and $\pi\in \mathcal{M}^{\frac{p}{2},\frac{n}{2}}$.  
\end{Corollaire}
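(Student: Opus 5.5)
The natural route is to specialize Theorem \ref{Theorem_Existence} to the constant weight $\omega\equiv 1$. First, $\omega\equiv 1$ belongs to every Muckenhoupt class, in particular to $A_{\frac{p}{2}}$, since $[1]_{A_{p/2}}=1$. With this choice the weighted Morrey space $\mathcal{M}^{\frac{p}{2},\frac{n}{2}}(\omega)$ coincides with $\mathcal{M}^{\frac{p}{2},\frac{n}{2}}$, because $\omega(B(x,r))=|B(x,r)|$. Hence $\dot{W}^{\pm1}_{\frac{p}{2},\frac{n}{2}}(\omega)=\dot{W}^{\pm1}_{\frac{p}{2},\frac{n}{2}}$ with identical norms. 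The intersections appearing in Theorem \ref{Theorem_Existence} therefore collapse to the unweighted spaces, and the smallness hypothesis becomes $2\|\vf\|_{\dot{W}^{-1}_{\frac{p}{2},\frac{n}{2}}}$ small, which is exactly the assumption of the corollary. Here $p$ is understood to satisfy $2<p<n$, together with $p>\frac{\rho n}{\rho n+\rho-n}$ so that Lemma \ref{LemmaRough} applies, as implicitly assumed in the theorem.

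For a cleaner argument that does not rely on the mixed inequalities, I would rerun the fixed-point scheme directly. Take $E=\dot{W}^1_{\frac{p}{2},\frac{n}{2}}$ and the mild formulation (\ref{EquaPointFixe}).

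\begin{itemize}
\item \emph{Linear term.} Boundedness of $\mathbb{P}$ on $\mathcal{M}^{\frac{p}{2},\frac{n}{2}}$ gives $\|(-\Delta)^{-1}\mathbb{P}\vf\|_E\le C\|\vf\|_{\dot{W}^{-1}_{\frac{p}{2},\frac{n}{2}}}$.
\item \emph{Bilinear term.} Boundedness of the Riesz transforms reduces it to $C\|\mathcal{T}(\vu)\otimes\vv\|_{\mathcal{M}^{\frac{p}{2},\frac{n}{2}}}$. Hölder's inequality (\ref{Holder}) bounds this by $\|\mathcal{T}(\vu)\|_{\mathcal{M}^{p,n}}\|\vv\|_{\mathcal{M}^{p,n}}$.
\item \emph{Closing the estimate.} Point 1) of Lemma \ref{LemmaRough} controls the first factor, and Lemma \ref{LemmeNoWeight} (with $\alpha=1$, $q=\frac{n}{2}$, so that $p^*=p$ and $q^*=n$) controls the second. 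This yields $C\|\vu\|_E\|\vv\|_E$.
\end{itemize}

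Lemma \ref{LemmeBP} then gives $\vu\in E$. The pressure is recovered from $\pi=(-\Delta)^{-1}div\,div(\mathcal{T}(\vu)\otimes\vu)-(-\Delta)^{-1}div\,\vf$ using the same bounds, so $\pi\in\mathcal{M}^{\frac{p}{2},\frac{n}{2}}$.

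There is no real obstacle here. The only point needing care is the bilinear estimate, which must hold for two different arguments $\vu,\vv$ rather than only on the diagonal in order to apply Lemma \ref{LemmeBP}. This follows from the same chain of inequalities, since $\mathcal{T}$ is linear.
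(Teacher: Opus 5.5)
Your proposal is correct and matches the paper's argument: the paper obtains the corollary by setting $\omega\equiv 1$ and rerunning the Banach--Picard scheme of Theorem \ref{Theorem_Existence} in $\dot{W}^1_{\frac{p}{2},\frac{n}{2}}$, using only Lemma \ref{LemmeNoWeight} and point 1) of Lemma \ref{LemmaRough}, exactly as in your second paragraph. Your explicit remarks on the implicit restriction $p>\frac{\rho n}{\rho n+\rho-n}$ and on proving the bilinear bound for two different arguments are welcome precisions that the paper leaves unstated.
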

%%%%%%%%%%%%%%%%%%%%%%%%%%%%%%%%%%%%%%%%%%%%%%%%%%%
\noindent The proof of this result follows the same lines as above: all the inequalities needed are stated in the Lemmas \ref{LemmeNoWeight} and \ref{LemmaRough}.
%%%%%%%%%%%%%%%%%%%%%%%%%%%%%%%%%%%%%%%%%%%%%%%%%%%
\section{Proof of the Theorem \ref{Theo_FullyWeighted}}\label{SeccTheo_FullyWeighted}
The proof relies in the use of the Banach-Picard contraction principle in the weighted resolution space $E=\dot{W}^{1}_{\frac{p}{2},\frac{\bf d}{2}}(\omega)$. Indeed, from the expression 
\begin{equation}\label{SolutionWeighted}
\vu=(-\Delta)^{-1}\mathbb{P}(\vf)-(-\Delta)^{-1}\mathbb{P}(div(\mathcal{T}(\vu)\otimes \vu)),
\end{equation}
and following the ideas of the Lemma \ref{LemmeBP}, we only need to prove the boundedness of the quantity $(-\Delta)^{-1}\mathbb{P}(\vf)$ as well as the continuity of the term $(-\Delta)^{-1}\mathbb{P}(div(\mathcal{T}(\vu)\otimes \vu))$ in the space $\dot{W}^{1}_{\frac{p}{2},\frac{\bf d}{2}}(\omega)$. For the external force we write 
$$\|(-\Delta)^{-1}\mathbb{P}(\vf)\|_{\dot{W}^{1}_{\frac{p}{2},\frac{\bf d}{2}}(\omega)}=\|(-\Delta)^{\frac{1}{2}}(-\Delta)^{-1}\mathbb{P}(\vf)\|_{\mathcal{M}^{\frac{p}{2},\frac{\bf d}{2}}(\omega)}\leq \|(-\Delta)^{-\frac{1}{2}}\vf\|_{\mathcal{M}^{\frac{p}{2},\frac{\bf d}{2}}(\omega)}=\|\vf\|_{\dot{W}^{-1}_{\frac{p}{2},\frac{\bf d}{2}}(\omega)}<+\infty,$$
since the Riesz transforms and the Leray projector are bounded in the weighted Morrey space $\mathcal{M}^{\frac{p}{2},\frac{\bf d}{2}}(\omega)$ as $\omega \in A_{\frac{p}{2}}$. For the continuity of the bilinear term we write 
\begin{eqnarray*}
\|(-\Delta)^{-1}\mathbb{P}(div(\mathcal{T}(\vu)\otimes \vu))\|_{\dot{W}^{1}_{\frac{p}{2},\frac{\bf d}{2}}(\omega)}&=&\|(-\Delta)^{\frac{1}{2}}(-\Delta)^{-1}\mathbb{P}(div(\mathcal{T}(\vu)\otimes \vu))\|_{\mathcal{M}^{\frac{p}{2},\frac{\bf d}{2}}(\omega)}\\
&\leq &C \|\mathcal{T}(\vu)\otimes \vu\|_{\mathcal{M}^{\frac{p}{2},\frac{\bf d}{2}}(\omega)}\leq C\|\mathcal{T}(\vu)\|_{\mathcal{M}^{p,{\bf d}}(\omega)}\|\vu\|_{\mathcal{M}^{p,{\bf d}}(\omega)},
\end{eqnarray*}
where we used the H\"older inequality in the weighted Morrey spaces. Since the weight considered satisfies the ${\bf d}$-lower Ahlfors condition (\ref{Ahlfors}), we can apply the third point of the Lemma \ref{LemmaRough} and the Lemma \ref{LemmeFullyWeighted} (with $2<p\leq {\bf d}$, $q=\frac{\bf d}{2}$ and $p^*=p$, $q^*={\bf d}$) to obtain the estimates $\|\mathcal{T}(\vu)\|_{\mathcal{M}^{p,{\bf d}}(\omega)}\leq C\|\vu\|_{\dot{W}^1_{\frac{p}{2},\frac{\bf d}{2}}(\omega)}$ and $\|\vu\|_{\mathcal{M}^{p,{\bf d}}(\omega)}\leq C\|\vu\|_{\dot{W}^1_{\frac{p}{2},\frac{\bf d}{2}}(\omega)}$, from which we deduce the control 
$$\|(-\Delta)^{-1}\mathbb{P}(div(\mathcal{T}(\vu)\otimes \vu))\|_{\dot{W}^{1}_{\frac{p}{2},\frac{\bf d}{2}}(\omega)}\leq C\|\vu\|_{\dot{W}^1_{\frac{p}{2},\frac{\bf d}{2}}(\omega)}\|\vu\|_{\dot{W}^1_{\frac{p}{2},\frac{\bf d}{2}}(\omega)}.$$
Once we have these estimates at hand, under the smallness hypothesis for the external force, we can obtain a solution $\vu$ of the problem (\ref{SolutionWeighted}) such that $\vu\in \dot{W}^1_{\frac{p}{2},\frac{\bf d}{2}}(\omega)$. For the pressure $\pi$, we simply write 
\begin{eqnarray*}
\|\pi\|_{\mathcal{M}^{\frac{p}{2},\frac{\bf d}{2}}(\omega)}&\leq &\|(-\Delta)^{-1}\left(div(div (\mathcal{T}(\vu)\otimes\vu))\right)\|_{\mathcal{M}^{\frac{p}{2},\frac{\bf d}{2}}(\omega)}+\|(-\Delta)^{-1}div(\vf)\|_{\mathcal{M}^{\frac{p}{2},\frac{\bf d}{2}}(\omega)}\\
&\leq & \| \mathcal{T}(\vu)\otimes\vu\|_{\mathcal{M}^{\frac{p}{2},\frac{\bf d}{2}}(\omega)}+\|(-\Delta)^{-\frac{1}{2}}\vf\|_{\mathcal{M}^{\frac{p}{2},\frac{\bf d}{2}}(\omega)}\\
&\leq & C\|\vu\|_{\dot{W}^1_{\frac{p}{2},\frac{\bf d}{2}}(\omega)}\|\vu\|_{\dot{W}^1_{\frac{p}{2},\frac{\bf d}{2}}(\omega)}+C\|\vf\|_{\dot{W}^{-1}_{\frac{p}{2},\frac{\bf d}{2}}(\omega)}<+\infty,
\end{eqnarray*}
and this ends the proof of the Theorem \ref{Theo_FullyWeighted}. \hfill $\blacksquare$
%%%%%%%%%%%%%%%%%%%%%%%%%%%%%%%%%%%%%%%%%%%%%%%%%%%
\mysection{Proof of the Theorem \ref{Theorem_Uniqueness}}\label{Secc_Theo2}
Let $\vu \in \dot{W}^{1}_{\frac{p}{2},\frac{n}{2}}\cap \dot{W}^{1}_{\frac{p}{2},\frac{n}{2}}(\omega)$ and $\vv \in \dot{W}^{1}_{\frac{p}{2},\frac{n}{2}}\cap \dot{W}^{1}_{\frac{p}{2},\frac{n}{2}}(\omega)$ be two solutions of the stationary Navier-Stokes equations \eqref{Equation_Intro} and let us assume that $\vu\neq \vv$. We have, after an application of the Leray projector, the equations
$$\vu=(-\Delta)^{-1}\mathbb{P}(\vf)-(-\Delta)^{-1}\mathbb{P}(div(\mathcal{T}(\vu)\otimes \vu))\qquad\mbox{and}\qquad \vv=(-\Delta)^{-1}\mathbb{P}(\vf)-(-\Delta)^{-1}\mathbb{P}(div(\mathcal{T}(\vv)\otimes \vv)),$$
and we obtain 
$$\vu-\vv=-(-\Delta)^{-1}\mathbb{P}(div(\mathcal{T}(\vu)\otimes \vu))+(-\Delta)^{-1}\mathbb{P}(div(\mathcal{T}(\vv)\otimes \vv)),$$
which, due to bilinearity, can be rewritten as 
\begin{equation}\label{FormuleBilineaireUnicite}
\vu-\vv=-(-\Delta)^{-1}\mathbb{P}(div(\mathcal{T}(\vu-\vv)\otimes \vu))+(-\Delta)^{-1}\mathbb{P}(div(\mathcal{T}(\vv)\otimes (\vv-\vu))).
\end{equation}
Now, applying the norm $\dot{W}^{1}_{\frac{p}{2}, \frac{n}{2}}$ to the both sides of this expression, we get
\begin{eqnarray*}
\|\vu-\vv\|_{\dot{W}^{1}_{\frac{p}{2}, \frac{n}{2}}}\leq \|(-\Delta)^{-1}\mathbb{P}(div(\mathcal{T}(\vu-\vv)\otimes \vu))\|_{\dot{W}^{1}_{\frac{p}{2}, \frac{n}{2}}}+\|(-\Delta)^{-1}\mathbb{P}(div(\mathcal{T}(\vv)\otimes (\vv-\vu)))\|_{\dot{W}^{1}_{\frac{p}{2}, \frac{n}{2}}}\\
\leq \|(-\Delta)^{\frac{1}{2}}(-\Delta)^{-1}\mathbb{P}(div(\mathcal{T}(\vu-\vv)\otimes \vu))\|_{\mathcal{M}^{\frac{p}{2}, \frac{n}{2}}}+\|(-\Delta)^{\frac{1}{2}}(-\Delta)^{-1}\mathbb{P}(div(\mathcal{T}(\vv)\otimes (\vv-\vu)))\|_{\mathcal{M}^{\frac{p}{2}, \frac{n}{2}}},
\end{eqnarray*}
from which we deduce, by the properties of the Leray projector and the boundedness of the Riesz transforms in Morrey spaces, the inequality
$$\|\vu-\vv\|_{\dot{W}^{1}_{\frac{p}{2}, \frac{n}{2}}}\leq \|\mathcal{T}(\vu-\vv)\otimes \vu\|_{\mathcal{M}^{\frac{p}{2}, \frac{n}{2}}}+\|\mathcal{T}(\vv)\otimes (\vv-\vu)\|_{\mathcal{M}^{\frac{p}{2}, \frac{n}{2}}}.$$
Now by the H\"older inequality in Morrey spaces we can write
$$\|\vu-\vv\|_{\dot{W}^{1}_{\frac{p}{2}, \frac{n}{2}}}\leq \|\mathcal{T}(\vu-\vv)\|_{\mathcal{M}^{p,n}}\|\vu\|_{\mathcal{M}^{p,n}}+\|\mathcal{T}(\vv)\|_{\mathcal{M}^{p,n}} \|\vv-\vu\|_{\mathcal{M}^{p,n}}.$$
Applying the inequality $\|\mathcal{T}(\vec{\phi})\|_{\mathcal{M}^{p,n}}\leq C\|\vec{\phi}\|_{\dot{W}^{1}_{\frac{p}{2}, \frac{n}{2}}}$ (see the first point of the Lemma \ref{LemmaRough}) and the estimate $\|\vec{\phi}\|_{\mathcal{M}^{p,n}}\leq C\|\vec{\phi}\|_{\dot{W}^{1}_{\frac{p}{2}, \frac{n}{2}}}$ (see Lemma \ref{LemmeNoWeight}), we obtain the control
\begin{eqnarray*}
\|\vu-\vv\|_{\dot{W}^{1}_{\frac{p}{2}, \frac{n}{2}}}&\leq &C\|\vu-\vv\|_{\dot{W}^{1}_{\frac{p}{2}, \frac{n}{2}}}\|\vu\|_{\dot{W}^{1}_{\frac{p}{2}, \frac{n}{2}}}+C\|\vv\|_{\dot{W}^{1}_{\frac{p}{2}, \frac{n}{2}}} \|\vv-\vu\|_{\dot{W}^{1}_{\frac{p}{2}, \frac{n}{2}}}\\
&\leq & C(\|\vu\|_{\dot{W}^{1}_{\frac{p}{2}, \frac{n}{2}}}+\|\vv\|_{\dot{W}^{1}_{\frac{p}{2}, \frac{n}{2}}}) \|\vv-\vu\|_{\dot{W}^{1}_{\frac{p}{2}, \frac{n}{2}}}. 
\end{eqnarray*}
Since we assumed that $\vu\neq \vv$, we have $\|\vu-\vv\|_{\dot{W}^{1}_{\frac{p}{2}, \frac{n}{2}}}=\|\vv-\vu\|_{\dot{W}^{1}_{\frac{p}{2}, \frac{n}{2}}}\neq 0$, and we obtain the inequality 
$$1\leq  C(\|\vu\|_{\dot{W}^{1}_{\frac{p}{2}, \frac{n}{2}}}+\|\vv\|_{\dot{W}^{1}_{\frac{p}{2}, \frac{n}{2}}}).$$
Therefore if the quantity $\|\vu\|_{\dot{W}^{1}_{\frac{p}{2}, \frac{n}{2}}}+\|\vv\|_{\dot{W}^{1}_{\frac{p}{2}, \frac{n}{2}}}$ is small enough, then we obtain a contradiction, from which we deduce that 
$0=\|\vu-\vv\|_{\dot{W}^{1}_{\frac{p}{2}, \frac{n}{2}}}$, since by the Sobolev embedding given in the Lemma \ref{LemmaRough} this quantity controls the term $\|\vu-\vv\|_{\mathcal{M}^{p,n}}$, we must have $\vu=\vv$ and the proof of the Theorem \ref{Theorem_Uniqueness} is complete. \hfill $\blacksquare$
%%%%%%%%%%%%%%%%%%%%%%%%%%%%%%%%%%%%%%%%%%%%%%%%%%%
\appendix
%%%%%%%%%%%%%%%%%%%%%%%%%%%%%%%%%%%%%%%%%%%%%%%%%%%
\section{Some unweighted and weighted estimates}\label{Secc_Weighted_ineq}
Sobolev like inequalities are well known in the setting of the unweighted Morrey spaces $\mathcal{M}^{p,q}$ (see \cite{Adams} for example). However, to the best of our knowledge, these estimates in a weighted framework were not studied before and we are not aware of a reference in the existing bibliography that establishes such inequalities. 
%%%%%%%%%%%%%%%%%%%%%%%%%%%%%%%%%%%%%%%%%%%%%%%%%%%
\subsection{Proof of the Lemma \ref{LemmeMixedWeight}.}
We are going to prove the following inequality there. 
$$\|\vec{\phi}\|_{\mathcal{M}^{p^*, q^*}(\omega)}\leq C\|\vec{\phi}\|_{\dot{W}^{\alpha}_{p, q}(\omega)}^{1-\theta}\|\vec{\phi}\|_{\dot{W}^{\alpha}_{p, q}}^{\theta},$$
where $0<\alpha<n$, $1<p\leq q<+\infty$,  $p^*=\frac{np}{n-\alpha q}$ and $q^*=\frac{nq}{n-\alpha q}$.
Our starting point is the following expression
\begin{eqnarray}
|\mathcal{I}_\alpha(f)(x)|=c_\alpha\left|\int_{\mathbb{R}^n}\frac{f(x-y)}{|y|^{n-\alpha}}dy\right|\leq c_\alpha\sum_{j\in \mathbb{Z}}\int_{\{2^j<|y|\leq2^{j+1}\}}\frac{|f(x-y)|}{|y|^{n-\alpha}}dy\notag\\
\leq c_\alpha\underbrace{\sum_{j\leq \lfloor \log_2(\mathcal{K})\rfloor}\int_{\{2^j<|y|\leq2^{j+1}\}}\frac{|f(x-y)|}{|y|^{n-\alpha}}dy}_{(I_1)}+c_\alpha\underbrace{\sum_{j> \lfloor \log_2(\mathcal{K})\rfloor}\int_{\{2^j<|y|\leq2^{j+1}\}}\frac{|f(x-y)|}{|y|^{n-\alpha}}dy}_{(I_2)},\quad\label{EstimationAvantMorreyApoids}
\end{eqnarray}
where the constant $\mathcal{K}>0$ will be fixed later. We will study the two quantities above separately. 
%%%%%%%%%%%%%%%%%%%%%%%%%%%%%%%%%%%%%%%%%%%%%%%%%%%
\begin{itemize}
\item For the term $(1)$, we write
$$I_1=\sum_{j\leq \lfloor \log_2(\mathcal{K})\rfloor}\int_{\{2^j<|y|\leq2^{j+1}\}}\frac{|f(x-y)|}{|y|^{n-\alpha}}dy\leq \sum_{j\leq \lfloor \log_2(\mathcal{K})\rfloor}2^{-j(n-\alpha)}\int_{\{2^j<|y|\leq2^{j+1}\}}|f(x-y)|dy,$$
since over the set $\{2^j<|y|\leq2^{j+1}\}$ we have $\frac{1}{|y|^{n-\alpha}}\leq 2^{-j(n-\alpha)}$. We now write
\begin{eqnarray*}
I_1&\leq &\sum_{j\leq \lfloor \log_2(\mathcal{K})\rfloor}2^{-jn+ j\alpha}\frac{|B(x,2^{j+1})|}{|B(x,2^{j+1})|}\int_{B(x,2^{j+1})}|f(y)|dy\\
&\leq& v_n\sum_{j\leq \lfloor \log_2(\mathcal{K})\rfloor}\frac{2^{-jn+ j\alpha+jn+n}}{|B(x,2^{j+1})|}\int_{B(x,2^{j+1})}|f(y)|dy\\
&\leq & v_n\sum_{j\leq \lfloor \log_2(\mathcal{K})\rfloor}\frac{2^{j\alpha}}{|B(x,2^{j+1})|}\int_{B(x,2^{j+1})}|f(y)|dy,
\end{eqnarray*}
where $v_n$ is the $n$-dimensional volume of the unit ball. Now, by the definition of the Hardy-Littlewood maximal function $\mathscr{M}$ we can write
\begin{equation}\label{EstimationPointWise1}
I_1\leq C\sum_{j\leq \lfloor \log_2(\mathcal{K})\rfloor}2^{j\alpha}\mathscr{M}(f)(x)\leq C\mathcal{K}^\alpha\mathscr{M}(f)(x).
\end{equation}
%%%%%%%%%%%%%%%%%%%%%%%%%%%%%%%%%%%%%%%%%%%%%%%%%%%
\item For the term $(2)$, we have the estimates
\begin{eqnarray*}
I_2&=&\sum_{j> \lfloor \log_2(\mathcal{K})\rfloor}\int_{\{2^j<|y|\leq2^{j+1}\}}\frac{|f(x-y)|}{|y|^{n-\alpha}}dy\leq \sum_{j> \lfloor \log_2(\mathcal{K})\rfloor}2^{-j(n-\alpha)}\int_{\{2^j<|y|\leq2^{j+1}\}}|f(x-y)|dy\\
&\leq &\sum_{j> \lfloor \log_2(\mathcal{K})\rfloor}2^{-j(n-\alpha)}\int_{B(x,2^{j+1})}|f(y)|dy
\end{eqnarray*}
which we rewrite in the following manner:
\begin{eqnarray*}
I_2&\leq &\sum_{j> \lfloor \log_2(\mathcal{K})\rfloor}2^{-j(n-\alpha)}\frac{|B(x, 2^{j+1})|}{|B(x, 2^{j+1})|}\int_{B(x, 2^{j+1})}|f(y)|dy\\
&\leq &C\sum_{j> \lfloor \log_2(\mathcal{K})\rfloor}\frac{2^{j\alpha}}{|B(x, 2^{j+1})|}\int_{B(x, 2^{j+1})}|f(y)|dy.
\end{eqnarray*}
We have now
$$I_2\leq C\sum_{j> \lfloor \log_2(\mathcal{K})\rfloor}2^{j\alpha}\frac{|B(x, 2^{j+1})|^{-\frac{1}{q}}}{|B(x, 2^{j+1})|^{\frac{1}{p}-\frac{1}{q}}}\left(\int_{B(x, 2^{j+1})}|f(y)|^pdy\right)^{\frac{1}{p}},$$
which is
$$I_2\leq C\sum_{j> \lfloor \log_2(\mathcal{K})\rfloor}2^{j(\alpha-\frac{n}{q})}\frac{1}{|B(x, 2^{j+1})|^{\frac{1}{p}-\frac{1}{q}}}\left(\int_{B(x, 2^{j+1})}|f(y)|^p\omega(y)dy\right)^{\frac{1}{p}}.$$
Now, using the definition of the unweighted Morrey spaces $\mathcal{M}^{p,q}$ given in (\ref{Def_Morrey}) (\emph{i.e.} when $\omega \equiv 1$), we have
$$I_2\leq C\|f\|_{\mathcal{M}^{p,q}}\sum_{j> \lfloor \log_2(\mathcal{K})\rfloor}2^{j(\alpha-\frac{n}{q})}.$$
Since by hypothesis we have $\alpha-\frac{n}{q}<0$, the previous sum converges and we finally obtain
$$I_2\leq C \|f\|_{\mathcal{M}^{p,q}}\mathcal{K}^{(\alpha-\frac{n}{q})}.$$
\end{itemize}
With these estimates for the quantities $I_1$ and $I_2$ we have 
\begin{eqnarray*}
|\mathcal{I}_\alpha(f)(x)|&\leq &CI_1+C I_2\\
&\leq &C\mathcal{K}^\alpha\mathscr{M}(f)(x)+C \|f\|_{\mathcal{M}^{p,q}}\mathcal{K}^{(\alpha-\frac{n}{q})}.
\end{eqnarray*}
To continue we set $\mathcal{K}=\left(\frac{\|f\|_{\mathcal{M}^{p,q}}}{\mathscr{M}(f)(x)}\right)^{\frac{q}{n}}$ and we can write
$$|\mathcal{I}_\alpha(f)(x)|\leq C\mathscr{M}(f)(x)^{1-\frac{\alpha q}{n}}\|f\|_{\mathcal{M}^{p,q}}^{\frac{\alpha q}{n}}.$$
Now, taking the averaged weighted $L^{q^*}(B(x,r))$ norm of the previous inequality we obtain
\begin{eqnarray*}
\frac{1}{\omega(B(x,r))^{\frac{1}{p^*}-\frac{1}{q^*}}}\left(\int_{B(x,r)}|\mathcal{I}_\alpha(f)(y)|^{q^*}\omega(y) dy\right)^{\frac{1}{q^*}}\leq C\|f\|_{\mathcal{M}^{p,q}}^{\frac{\alpha q}{n}}\qquad \qquad\\
\times\frac{1}{\omega(B(x,r))^{\frac{1}{p^*}-\frac{1}{q^*}}}\left(\int_{B(x,r)}\left[\mathscr{M}(f)(y)^{(1-\frac{\alpha q}{n})}\right]^{q^*}\omega(y)dy\right)^{\frac{1}{q^*}},
\end{eqnarray*}
and using the definition of the weighted Morrey spaces given in (\ref{Def_Morrey}) and the properties of these spaces given in (\ref{PuissanceFracMorrey}), we obtain
$$\|\mathcal{I}_\alpha(f)\|_{\mathcal{M}^{p^*, q^*}(\omega)}\leq C\|f\|_{\mathcal{M}^{p,q}}^{\frac{\alpha q}{n}}\|\mathscr{M}(f)^{(1-\frac{\alpha q}{n})}\|_{\mathcal{M}^{p^*, q^*}(\omega)}\leq C\|f\|_{\mathcal{M}^{p,q}}^{\frac{\alpha q}{n}}\|\mathscr{M}(f)\|_{\mathcal{M}^{p^*(1-\frac{\alpha q}{n}), q^*(1-\frac{\alpha q}{n})}(\omega)}^{(1-\frac{\alpha q}{n})}.$$
Since $p^*(1-\frac{\alpha q}{n})>1$, the maximal function $\mathscr{M}$ is bounded in the Morrey space $\mathcal{M}^{p^*(1-\frac{\alpha q}{n}), q^*(1-\frac{\alpha q}{n})}(\omega)$ (recall that $\omega \in A_p$ and that $p=p^*(1-\frac{\alpha q}{n})$), so we can write
$$\|\mathcal{I}_\alpha(f)\|_{\mathcal{M}^{p^*, q^*}(\omega)}\leq C\|f\|_{\mathcal{M}^{p,q}}^{\frac{\alpha q}{n}}\|f\|_{\mathcal{M}^{p^*(1-\frac{\alpha q}{n}), q^*(1-\frac{\alpha q}{n})}(\omega)}^{(1-\frac{\alpha q}{n})}.$$
Since by hypothesis we have $p^*(1-\frac{\alpha q}{n})=p$ and $q^*(1-\frac{\alpha q}{n})=q$, we have 
$$\|\mathcal{I}_\alpha(f)\|_{\mathcal{M}^{p^*, q^*}(\omega)}\leq C\|f\|_{\mathcal{M}^{p,q}}^{\frac{\alpha q}{n}}\|f\|_{\mathcal{M}^{p,q}(\omega)}^{(1-\frac{\alpha q}{n})}.$$
To conclude, it is enough to consider the function $f=(-\Delta)^{\frac{\alpha}{2}}\phi$ and we obtain
\begin{eqnarray*}
\|\phi\|_{\mathcal{M}^{p^*, q^*}(\omega)}&\leq &C\|(-\Delta)^{\frac{\alpha}{2}}\phi\|_{\mathcal{M}^{p,q}}^{\frac{\alpha q}{n}}\|(-\Delta)^{\frac{\alpha}{2}}\phi\|_{\mathcal{M}^{p,q}(\omega)}^{(1-\frac{\alpha q}{n})}\\
&\leq &C\|\phi\|_{\dot{W}^\alpha_{p,q}}^{\frac{\alpha q}{n}}\|\phi\|_{\dot{W}^\alpha_{p,q}(\omega)}^{(1-\frac{\alpha q}{n})},
\end{eqnarray*}
which is the announced inequality since it is not difficult to deduce from it a similar estimate for a vector field $\vec{\phi}$.
\hfill $\blacksquare$
%%%%%%%%%%%%%%%%%%%%%%%%%%%%%%%%%%%%%%%%%%%%%%%%%%%
\subsection{Proof of the Lemma \ref{LemmeFullyWeighted}.}
%%%%%%%%%%%%%%%%%%%%%%%%%%%%%%%%%%%%%%%%%%%%%%%%%%%
In order to establish this lemma, we will first study a pointwise weighted inequality which is interesting on its own: 
%%%%%%%%%%%%%%%%%%%%%%%%%%%%%%%%%%%%%%%%%%%%%%%%%%%
\begin{Lemme}\label{LemmaPointwiseSubrepresentation}
Consider $0<\alpha<n$ a parameter and consider a locally integrable function $f:\mathbb{R}^n\longrightarrow \mathbb{R}$ such that we have $f\in \mathcal{M}^{p,q}(\omega)$ for $1\leq p\leq q<+\infty$ and where $\omega \in A_p$ is a Muckenhoupt weight that satisfies the ${\bf d}$-Ahlfors lower condition stated in the formula (\ref{Ahlfors}) above with $\alpha<\frac{{\bf d}}{q}$. Then we have the pointwise estimate:
$$|\mathcal{I}_\alpha(f)(x)|\leq C\mathscr{M}(f)(x)^{1-\frac{\alpha q}{{\bf d}}}\|f\|_{\mathcal{M}^{p,q}(\omega)}^{\frac{\alpha q}{{\bf d}}}.$$
\end{Lemme}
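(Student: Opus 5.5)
We follow the Hedberg-type splitting used in the proof of Lemma \ref{LemmeMixedWeight}. We decompose $\mathcal{I}_\alpha(f)(x)$ dyadically into the local part $I_1$ (annuli $2^j<|y|\leq 2^{j+1}$ with $j\leq \lfloor\log_2\mathcal{K}\rfloor$) and the tail $I_2$ (the remaining $j$). The parameter $\mathcal{K}>0$ is fixed at the end.

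The local part is handled exactly as before and involves no weight. Since $|y|^{-(n-\alpha)}\leq 2^{-j(n-\alpha)}$ on each annulus, and since we may enlarge the annulus to the ball $B(x,2^{j+1})$, summing the geometric series gives
\begin{equation*}
I_1\leq C\mathcal{K}^{\alpha}\mathscr{M}(f)(x).
\end{equation*}

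The main obstacle is the tail, because $f$ is only controlled in $\mathcal{M}^{p,q}(\omega)$. We bound each dyadic piece by $2^{-j(n-\alpha)}\int_{B_j}|f|\,dy$ with $B_j=B(x,2^{j+1})$, and we insert the weight through Hölder's inequality:
\begin{equation*}
\int_{B_j}|f|\,dy\leq \left(\int_{B_j}|f|^p\omega\,dy\right)^{\frac1p}\left(\int_{B_j}\omega^{-\frac{1}{p-1}}dy\right)^{\frac{p-1}{p}}.
\end{equation*}
The $A_p$ condition then gives $\left(\int_{B_j}\omega^{-\frac{1}{p-1}}\right)^{\frac{p-1}{p}}\leq [\omega]_{A_p}^{\frac1p}|B_j|\,\omega(B_j)^{-\frac1p}$. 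The definition of $\mathcal{M}^{p,q}(\omega)$ gives $\left(\int_{B_j}|f|^p\omega\right)^{\frac1p}\leq \omega(B_j)^{\frac1p-\frac1q}\|f\|_{\mathcal{M}^{p,q}(\omega)}$. Combining these, the $j$-th term is bounded by
\begin{equation*}
C\,2^{j\alpha}\,\omega(B_j)^{-\frac1q}\,\|f\|_{\mathcal{M}^{p,q}(\omega)}.
\end{equation*}
At this point the ${\bf d}$-lower Ahlfors condition (\ref{Ahlfors}) is used, in the form $\omega(B_j)^{-1/q}\leq C2^{-j{\bf d}/q}$. The term is therefore at most $C2^{j(\alpha-{\bf d}/q)}\|f\|_{\mathcal{M}^{p,q}(\omega)}$, and the series converges because $\alpha<{\bf d}/q$. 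This yields
\begin{equation*}
I_2\leq C\mathcal{K}^{\alpha-\frac{{\bf d}}{q}}\|f\|_{\mathcal{M}^{p,q}(\omega)}.
\end{equation*}
The case $p=1$ is allowed in the statement. There we replace the Hölder step by the $A_1$ bound $\frac{1}{|B|}\int_B|f|\leq \frac{C}{\omega(B)}\int_B|f|\omega$, which follows from $\frac{\omega(B)}{|B|}\leq C\,\mathrm{ess\,inf}_B\,\omega$, and we obtain the same estimate.

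Finally we balance the two bounds by choosing $\mathcal{K}=\left(\|f\|_{\mathcal{M}^{p,q}(\omega)}/\mathscr{M}(f)(x)\right)^{q/{\bf d}}$. With this choice $\mathcal{K}^{\alpha}\mathscr{M}(f)(x)$ and $\mathcal{K}^{\alpha-{\bf d}/q}\|f\|_{\mathcal{M}^{p,q}(\omega)}$ both equal $\mathscr{M}(f)(x)^{1-\frac{\alpha q}{{\bf d}}}\|f\|_{\mathcal{M}^{p,q}(\omega)}^{\frac{\alpha q}{{\bf d}}}$, which is the claimed pointwise estimate. The degenerate cases $\mathscr{M}(f)(x)=0$ (then $f=0$ a.e.) and $\mathscr{M}(f)(x)=+\infty$ are trivial.
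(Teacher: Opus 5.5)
Your proposal is correct and follows the paper's proof essentially step by step: the same dyadic Hedberg splitting into $I_1$ and $I_2$, the $A_p$ averaging inequality (which you derive directly via H\"older instead of citing Grafakos) combined with the Morrey norm and the lower Ahlfors bound on the tail, and the same balancing choice of $\mathcal{K}$. Your separate treatment of $p=1$ through the $A_1$ condition and of the degenerate values $\mathscr{M}(f)(x)\in\{0,+\infty\}$ are small additions that the paper leaves implicit.
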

The proof of this pointwise estimate follows the same lines as the one studied above, we give some of the details for the convenience of the reader.\\ 

%%%%%%%%%%%%%%%%%%%%%%%%%%%%%%%%%%%%%%%%%%%%%%%%%%%
\noindent {\bf Proof of the Lemma \ref{LemmaPointwiseSubrepresentation}.}  Using the previous computations displayed in (\ref{EstimationAvantMorreyApoids}) we have:
\begin{eqnarray*}
|\mathcal{I}_\alpha(f)(x)|\leq c_\alpha\underbrace{\sum_{j\leq \lfloor \log_2(\mathcal{K})\rfloor}\int_{\{2^j<|y|\leq2^{j+1}\}}\frac{|f(x-y)|}{|y|^{n-\alpha}}dy}_{(I_1)}+c_\alpha\underbrace{\sum_{j> \lfloor \log_2(\mathcal{K})\rfloor}\int_{\{2^j<|y|\leq2^{j+1}\}}\frac{|f(x-y)|}{|y|^{n-\alpha}}dy}_{(I_2)},
\end{eqnarray*}
and we will study the two quantities above separately. 
%%%%%%%%%%%%%%%%%%%%%%%%%%%%%%%%%%%%%%%%%%%%%%%%%%%
\begin{itemize}
\item For the term $(1)$, by the same arguments used in (\ref{EstimationPointWise1}) above we obtain the control
$$I_1 \leq C\mathcal{K}^\alpha\mathscr{M}(f)(x).$$
%%%%%%%%%%%%%%%%%%%%%%%%%%%%%%%%%%%%%%%%%%%%%%%%%%%
\item For the term $(2)$, we will need to consider a small variant of the previous arguments. Indeed, we have:
\begin{eqnarray*}
I_2&=&\sum_{j> \lfloor \log_2(\mathcal{K})\rfloor}\int_{\{2^j<|y|\leq2^{j+1}\}}\frac{|f(x-y)|}{|y|^{n-\alpha}}dy\leq \sum_{j> \lfloor \log_2(\mathcal{K})\rfloor}2^{-j(n-\alpha)}\int_{\{2^j<|y|\leq2^{j+1}\}}|f(x-y)|dy\\
&\leq &\sum_{j> \lfloor \log_2(\mathcal{K})\rfloor}2^{-j(n-\alpha)}\int_{B(x,2^{j+1})}|f(y)|dy
\end{eqnarray*}
which we rewrite in the following manner:
$$I_2\leq \sum_{j> \lfloor \log_2(\mathcal{K})\rfloor}2^{-j(n-\alpha)}\frac{|B(x, 2^{j+1})}{|B(x, 2^{j+1})|}\int_{B(x, 2^{j+1})}|f(y)|dy=C\sum_{j> \lfloor \log_2(\mathcal{K})\rfloor}\frac{2^{j\alpha}}{|B(x, 2^{j+1})|}\int_{B(x, 2^{j+1})}|f(y)|dy.$$
Recalling that if $\omega$ is a $A_{p}$ weight with $1\leq p<+\infty$, then by using H\" older's inequality we have the following estimate
$$\frac{1}{|B|}\int_{B}|f(x)|dx\leq [\omega]_{A_{p}}^\frac{1}{{p}}\left(\frac{1}{\omega(B)}\int_B|f(x)|^{p}\omega(x)dx\right)^\frac{1}{{p}},$$
for any ball $B\subset \mathbb{R}^n$ (see the book \cite{Grafakos}, p. 285 for a proof of this fact), then we can obtain the control 
\begin{eqnarray*}
I_2&\leq &C[\omega]_{A_p}^{\frac{1}{p}}\sum_{j> \lfloor \log_2(\mathcal{K})\rfloor}2^{j\alpha}\frac{1}{\omega(B(x, 2^{j+1}))^{\frac{1}{p}}}\left(\int_{B(x, 2^{j+1})}|f(y)|^p\omega(y)dy\right)^{\frac{1}{p}}\\
&\leq &C[\omega]_{A_p}^{\frac{1}{p}}\sum_{j> \lfloor \log_2(\mathcal{K})\rfloor}2^{j\alpha}\frac{\omega(B(x, 2^{j+1}))^{-\frac{1}{q}}}{\omega(B(x, 2^{j+1}))^{\frac{1}{p}-\frac{1}{q}}}\left(\int_{B(x, 2^{j+1})}|f(y)|^p\omega(y)dy\right)^{\frac{1}{p}},
\end{eqnarray*}
for some index $p\leq q<+\infty$. At this point we use the ${\bf d}$-lower Ahlfors condition given in (\ref{Ahlfors}) with $q<{\bf d}$ to obtain the inequality $\omega(B(x, 2^{j+1}))^{-\frac{1}{q}}\leq C 2^{-(j+1)\frac{{\bf d}}{q}}$, which allows us to write
\begin{eqnarray*}
I_2&\leq &C[\omega]_{A_p}^{\frac{1}{p}}\sum_{j> \lfloor \log_2(\mathcal{K})\rfloor}2^{j\alpha}\frac{2^{-(j+1)\frac{{\bf d}}{q}}}{\omega(B(x, 2^{j+1}))^{\frac{1}{p}-\frac{1}{q}}}\left(\int_{B(x, 2^{j+1})}|f(y)|^p\omega(y)dy\right)^{\frac{1}{p}}\\
&\leq &C[\omega]_{A_p}^{\frac{1}{p}}\sum_{j> \lfloor \log_2(\mathcal{K})\rfloor}2^{j(\alpha-\frac{{\bf d}}{q})}\frac{1}{\omega(B(x, 2^{j+1}))^{\frac{1}{p}-\frac{1}{q}}}\left(\int_{B(x, 2^{j+1})}|f(y)|^p\omega(y)dy\right)^{\frac{1}{p}}.
\end{eqnarray*}
Now, using the definition of the weighted Morrey spaces $\mathcal{M}^{p,q}(\omega)$ given in (\ref{Def_Morrey}), we have
$$I_2\leq C[\omega]_{A_p}^{\frac{1}{p}} \|f\|_{\mathcal{M}^{p,q}(\omega)}\sum_{j> \lfloor \log_2(\mathcal{K})\rfloor}2^{j(\alpha-\frac{{\bf d}}{q})}.$$
Since by hypothesis we have $\alpha-\frac{{\bf d}}{q}<0$, the previous sum converges and we finally obtain
$$I_2\leq C[\omega]_{A_p}^{\frac{1}{p}} \|f\|_{\mathcal{M}^{p,q}(\omega)}\mathcal{K}^{(\alpha-\frac{{\bf d}}{q})}.$$
\end{itemize}
With these estimates for the quantities $I_1$ and $I_2$ we have 
\begin{eqnarray*}
|\mathcal{I}_\alpha(f)(x)|&\leq &CI_1+C I_2\\
&\leq &C\mathcal{K}^\alpha\mathscr{M}(f)(x)+C[\omega]_{A_p}^{\frac{1}{p}} \|f\|_{\mathcal{M}^{p,q}(\omega)}\mathcal{K}^{(\alpha-\frac{{\bf d}}{q})}.
\end{eqnarray*}
To continue we set $\mathcal{K}=\left(\frac{\|f\|_{\mathcal{M}^{p,q}(\omega)}}{\mathscr{M}(f)(x)}\right)^{\frac{q}{{\bf d}}}$ and we can write
$$|\mathcal{I}_\alpha(f)(x)|\leq C\mathscr{M}(f)(x)^{1-\frac{\alpha q}{{\bf d}}}\|f\|_{\mathcal{M}^{p,q}(\omega)}^{\frac{\alpha q}{{\bf d}}},$$
which is the wished inequality and the Lemma \ref{LemmaPointwiseSubrepresentation} is now proven. \hfill $\blacksquare$\\

%%%%%%%%%%%%%%%%%%%%%%%%%%%%%%%%%%%%%%%%%%%%%%%%%%%
With this pointwise inequality, it is not difficult to obtain the result stated in the Lemma \ref{LemmeFullyWeighted}. Indeed, setting $f=(-\Delta)^{\frac{\alpha}{2}}\phi$, we obtain
$$|\mathcal{I}_\alpha((-\Delta)^{\frac{\alpha}{2}}\phi)(x)|\leq C\mathscr{M}((-\Delta)^{\frac{\alpha}{2}}\phi)(x)^{1-\frac{\alpha q}{{\bf d}}}\|(-\Delta)^{\frac{\alpha}{2}}\phi\|_{\mathcal{M}^{p,q}(\omega)}^{\frac{\alpha q}{{\bf d}}},$$
and considering the norm $\|\cdot\|_{\mathcal{M}^{p^*, q^*}(\omega)}$, we obtain
$$\left\|\mathcal{I}_\alpha((-\Delta)^{\frac{\alpha}{2}}\phi)\right\|_{\mathcal{M}^{p^*, q^*}(\omega)}\leq C\left\|\mathscr{M}((-\Delta)^{\frac{\alpha}{2}}\phi)^{1-\frac{\alpha q}{{\bf d}}}\right\|_{\mathcal{M}^{p^*, q^*}(\omega)}\|(-\Delta)^{\frac{\alpha}{2}}\phi\|_{\mathcal{M}^{p,q}(\omega)}^{\frac{\alpha q}{{\bf d}}}.$$
Due to the properties of the weighted Morrey spaces given in (\ref{PuissanceFracMorrey}), we can write
$$\|\phi\|_{\mathcal{M}^{p^*, q^*}(\omega)}\leq C\left\|\mathscr{M}((-\Delta)^{\frac{\alpha}{2}}\phi)\right\|_{\mathcal{M}^{(1-\frac{\alpha q}{{\bf d}})p^*, (1-\frac{\alpha q}{{\bf d}})q^*}(\omega)}^{1-\frac{\alpha q}{{\bf d}}}\|(-\Delta)^{\frac{\alpha}{2}}\phi\|_{\mathcal{M}^{p,q}(\omega)}^{\frac{\alpha q}{{\bf d}}},$$
and since by hypothesis we have $1<p=(1-\frac{\alpha q}{{\bf d}})p^*\leq q=(1-\frac{\alpha q}{{\bf d}})q^*$, then the maximal function is bounded, so we obtain the control 
$$\|\phi\|_{\mathcal{M}^{p^*, q^*}(\omega)}\leq C\left\|(-\Delta)^{\frac{\alpha}{2}}\phi\right\|_{\mathcal{M}^{(1-\frac{\alpha q}{{\bf d}})p^*, (1-\frac{\alpha q}{{\bf d}})q^*}(\omega)}^{1-\frac{\alpha q}{{\bf d}}}\|(-\Delta)^{\frac{\alpha}{2}}\phi\|_{\mathcal{M}^{p,q}(\omega)}^{\frac{\alpha q}{{\bf d}}}.$$
Recalling that $(1-\frac{\alpha q}{{\bf d}})p^*=p$ and $(1-\frac{\alpha q}{{\bf d}})q^*=q$, we have
\begin{eqnarray*}
\|\phi\|_{\mathcal{M}^{p^*, q^*}(\omega)}&\leq &C\left\|(-\Delta)^{\frac{\alpha}{2}}\phi\right\|_{\mathcal{M}^{p, q}(\omega)}^{1-\frac{\alpha q}{{\bf d}}}\|(-\Delta)^{\frac{\alpha}{2}}\phi\|_{\mathcal{M}^{p,q}(\omega)}^{\frac{\alpha q}{{\bf d}}}=C\|(-\Delta)^{\frac{\alpha}{2}}\phi\|_{\mathcal{M}^{p,q}(\omega)}\\
&\leq & C\|\phi\|_{\dot{W}^{\alpha}_{p,q}(\omega)},
\end{eqnarray*}
which is the announced estimate and the Lemma \ref{LemmeFullyWeighted} is now completely proven, as the same inequality can be established for a vector field $\vec{\phi}$. \hfill $\blacksquare$
%%%%%%%%%%%%%%%%%%%%%%%%%%%%%%%%%%%%%%%%%%%%%%%%%%%
\section{A weighted uniqueness result}\label{AppendixUniqueness}
In this section we explore a weighted version of the Theorem \ref{Theorem_Uniqueness}. Namely, we deduce the uniqueness of a solution $\vu\in \dot{W}^1_{\frac{p}{2},\frac{n}{2}}\cap \dot{W}^1_{\frac{p}{2},\frac{n}{2}}(\omega)$ under a weighted hypothesis only and we will assume that the weight $\omega$ satisfies a ${\bf n}$-lower Ahlfors condition.\\

Let  $\vu \in \dot{W}^{1}_{\frac{p}{2},\frac{n}{2}}\cap \dot{W}^{1}_{\frac{p}{2},\frac{n}{2}}(\omega)$ and $\vv \in \dot{W}^{1}_{\frac{p}{2},\frac{n}{2}}\cap \dot{W}^{1}_{\frac{p}{2},\frac{n}{2}}(\omega)$ be two solutions of the stationary Navier-Stokes equations \eqref{Equation_Intro} and let us assume that $\vu\neq \vv$. Considering the expression (\ref{FormuleBilineaireUnicite}) and applying the norm $\dot{W}^{1}_{\frac{p}{2}, \frac{n}{2}}(\omega)$ to the both sides of this expression we have 
\begin{eqnarray*}
\|\vu-\vv\|_{\dot{W}^{1}_{\frac{p}{2}, \frac{n}{2}}(\omega)}&\leq &\|(-\Delta)^{\frac{1}{2}}(-\Delta)^{-1}\mathbb{P}(div(\mathcal{T}(\vu-\vv)\otimes \vu))\|_{\mathcal{M}^{\frac{p}{2}, \frac{n}{2}}(\omega)}\\
&&+\|(-\Delta)^{\frac{1}{2}}(-\Delta)^{-1}\mathbb{P}(div(\mathcal{T}(\vv)\otimes (\vv-\vu)))\|_{\mathcal{M}^{\frac{p}{2}, \frac{n}{2}}(\omega)},
\end{eqnarray*}
and by the properties of the Leray projector and the boundedness of the Riesz transforms in Morrey spaces, we get $\|\vu-\vv\|_{\dot{W}^{1}_{\frac{p}{2}, \frac{n}{2}}(\omega)}\leq \|\mathcal{T}(\vu-\vv)\otimes \vu\|_{\mathcal{M}^{\frac{p}{2}, \frac{n}{2}}(\omega)}+\|\mathcal{T}(\vv)\otimes (\vv-\vu)\|_{\mathcal{M}^{\frac{p}{2}, \frac{n}{2}}(\omega)}$. By the H\"older inequality in Morrey spaces, we obtain
$$\|\vu-\vv\|_{\dot{W}^{1}_{\frac{p}{2}, \frac{n}{2}}(\omega)}\leq \|\mathcal{T}(\vu-\vv)\|_{\mathcal{M}^{p,n}(\omega)}\|\vu\|_{\mathcal{M}^{p,n}(\omega)}+\|\mathcal{T}(\vv)\|_{\mathcal{M}^{p,n}(\omega)} \|\vv-\vu\|_{\|_{\mathcal{M}^{p,n}}(\omega)}.$$
Applying the inequality $\|\mathcal{T}(\vec{\phi})\|_{\mathcal{M}^{p,n}(\omega)}\leq C\|\vec{\phi}\|_{\dot{W}^{1}_{\frac{p}{2}, \frac{n}{2}}(\omega)}$ (see the third point of the Lemma \ref{LemmaRough}, recall that we are assuming a ${\bf n}$-lower Ahlfors condition) and the estimate $\|\vec{\phi}\|_{\mathcal{M}^{p,n}(\omega)}\leq C\|\vec{\phi}\|_{\dot{W}^{1}_{\frac{p}{2}, \frac{n}{2}}(\omega)}$ (see Lemma \ref{LemmeFullyWeighted}), we obtain the control
\begin{eqnarray*}
\|\vu-\vv\|_{\dot{W}^{1}_{\frac{p}{2}, \frac{n}{2}}(\omega)}&\leq &C\|\vu-\vv\|_{\dot{W}^{1}_{\frac{p}{2}, \frac{n}{2}}(\omega)}\|\vu\|_{\dot{W}^{1}_{\frac{p}{2}, \frac{n}{2}}(\omega)}+C\|\vv\|_{\dot{W}^{1}_{\frac{p}{2}, \frac{n}{2}}(\omega)} \|\vv-\vu\|_{\dot{W}^{1}_{\frac{p}{2}, \frac{n}{2}}(\omega)}\\
&\leq & C(\|\vu\|_{\dot{W}^{1}_{\frac{p}{2}, \frac{n}{2}}(\omega)}+\|\vv\|_{\dot{W}^{1}_{\frac{p}{2}, \frac{n}{2}}(\omega)}) \|\vv-\vu\|_{\dot{W}^{1}_{\frac{p}{2}, \frac{n}{2}}(\omega)}. 
\end{eqnarray*}
Since we assumed that $\vu\neq \vv$, we have $\|\vu-\vv\|_{\dot{W}^{1}_{\frac{p}{2}, \frac{n}{2}}(\omega)}=\|\vv-\vu\|_{\dot{W}^{1}_{\frac{p}{2}, \frac{n}{2}}(\omega)}\neq 0$, and we obtain the inequality $1\leq  C(\|\vu\|_{\dot{W}^{1}_{\frac{p}{2}, \frac{n}{2}}(\omega)}+\|\vv\|_{\dot{W}^{1}_{\frac{p}{2}, \frac{n}{2}}(\omega)})$. Therefore if the quantity $\|\vu\|_{\dot{W}^{1}_{\frac{p}{2}, \frac{n}{2}}(\omega)}+\|\vv\|_{\dot{W}^{1}_{\frac{p}{2}, \frac{n}{2}}(\omega)}$ is small enough, then we obtain a contradiction, from which we deduce that 
$0=\|\vu-\vv\|_{\dot{W}^{1}_{\frac{p}{2}, \frac{n}{2}}(\omega)}$, since by the Sobolev embedding given in the Lemma \ref{LemmaRough} this quantity controls the term $\|\vu-\vv\|_{\mathcal{M}^{p,n}(\omega)}$ then we must have $\vu=\vv$. We thus have the following result: 
%%%%%%%%%%%%%%%%%%%%%%%%%%%%%%%%%%%%%%%%%%%%%%%%%%%
\begin{Lemme}
Consider $\vu, \vv\in \dot{W}^1_{\frac{p}{2},\frac{n}{2}}\cap \dot{W}^1_{\frac{p}{2},\frac{n}{2}}(\omega)$ be two solutions associated to the same external force $\vf\in \dot{W}^{-1}_{\frac{p}{2}, \frac{n}{2}}\cap \dot{W}^{-1}_{\frac{p}{2}, \frac{n}{2}}(\omega)$ of the equation (\ref{Equation_Intro}) where $\omega\in A_{\frac{p}{2}}$ with $2<p< n$ is a Muckenhoupt weight that satisfies the ${\bf n}$-lower condition (\ref{Ahlfors}). If $\|\vu\|_{\dot{W}^{1}_{\frac{p}{2}, \frac{n}{2}}(\omega)}+\|\vv\|_{\dot{W}^{1}_{\frac{p}{2}, \frac{n}{2}}(\omega)}$ is small enough, then we have $\vu=\vv$.
\end{Lemme}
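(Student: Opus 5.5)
The plan mirrors the proof of Theorem \ref{Theorem_Uniqueness}, with every unweighted estimate replaced by its fully weighted counterpart; this is possible because $\omega$ satisfies the ${\bf n}$-lower Ahlfors condition (\ref{Ahlfors}).

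The first step is to subtract the two fixed-point formulations (\ref{EquaPointFixe}) for $\vu$ and $\vv$. Using bilinearity, this gives the identity (\ref{FormuleBilineaireUnicite}), and I apply the norm $\dot{W}^1_{\frac{p}{2},\frac{n}{2}}(\omega)$ to it. Next, I remove the operator $(-\Delta)^{\frac12}(-\Delta)^{-1}\mathbb{P}\,div$. It is a combination of Riesz transforms, and these are bounded on $\mathcal{M}^{\frac{p}{2},\frac{n}{2}}(\omega)$ because $\omega\in A_{\frac p2}$ (\cite[Theorem 3.3]{Komori}). This reduces the left-hand side to bounding
$$\|\mathcal{T}(\vu-\vv)\otimes\vu\|_{\mathcal{M}^{\frac p2,\frac n2}(\omega)}+\|\mathcal{T}(\vv)\otimes(\vv-\vu)\|_{\mathcal{M}^{\frac p2,\frac n2}(\omega)}.$$
I then apply the weighted H\"older inequality (\ref{Holder}) with $\frac2p=\frac1p+\frac1p$ and $\frac2n=\frac1n+\frac1n$. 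This bounds each term by a product of two $\mathcal{M}^{p,n}(\omega)$ norms.

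The second step closes the estimate in the weighted norm alone. For the factors $\vu$ and $\vv-\vu$, I use Lemma \ref{LemmeFullyWeighted} with ${\bf d}=n$, $\alpha=1$ and indices $(\frac p2,\frac n2)$. The condition $\alpha<{\bf d}/q$ becomes $1<2$, so it holds, and the lemma gives $p^*=p$ and $q^*=n$. Hence
$$\|\vec\phi\|_{\mathcal{M}^{p,n}(\omega)}\leq C\|\vec\phi\|_{\dot W^1_{\frac p2,\frac n2}(\omega)}.$$
For the factors $\mathcal{T}(\vu-\vv)$ and $\mathcal{T}(\vv)$, I use the third point of Lemma \ref{LemmaRough} with the same indices, which gives
$$\|\mathcal{T}(\vec\phi)\|_{\mathcal{M}^{p,n}(\omega)}\leq C\|\vec\phi\|_{\dot W^1_{\frac p2,\frac n2}(\omega)}.$$
Combining these bounds yields
$$\|\vu-\vv\|_{\dot W^1_{\frac p2,\frac n2}(\omega)}\leq C\left(\|\vu\|_{\dot W^1_{\frac p2,\frac n2}(\omega)}+\|\vv\|_{\dot W^1_{\frac p2,\frac n2}(\omega)}\right)\|\vu-\vv\|_{\dot W^1_{\frac p2,\frac n2}(\omega)}.$$
Under the smallness hypothesis the factor in parentheses satisfies $C(\cdots)<1$. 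This forces $\|\vu-\vv\|_{\dot W^1_{\frac p2,\frac n2}(\omega)}=0$.

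The last step is to conclude $\vu=\vv$. By the weighted Sobolev embedding of Lemma \ref{LemmeFullyWeighted}, $\|\vu-\vv\|_{\mathcal{M}^{p,n}(\omega)}=0$. Since $\omega>0$, this means $\vu=\vv$ almost everywhere.

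I expect the main obstacle to be checking that the index hypotheses of Lemma \ref{LemmaRough}(3) are compatible with the statement. That lemma is formulated with $\dot W^1_{p,q}(\omega)$ and requires $\omega\in A_p$ together with the range condition $\frac{\rho n}{\rho n+\rho-n}<p<n$. Here we apply it with exponent $\frac p2$, and we have $\omega\in A_{\frac p2}$, which is contained in $A_p$. So I will read the lemma with $p$ replaced by $\frac p2$ and $q=\frac n2$, and verify that the resulting $p^*=p$ and $q^*=n$ match. I would also state the implicit restriction on $p$ relative to $\rho$, exactly as it is implicitly assumed in Theorem \ref{Theorem_Uniqueness}.
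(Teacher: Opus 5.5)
Your proposal is correct and matches the paper's argument step for step. You subtract the two fixed-point equations to get (\ref{FormuleBilineaireUnicite}), remove the Leray/Riesz operators in $\mathcal{M}^{\frac p2,\frac n2}(\omega)$, apply the weighted H\"older inequality, and close with the third point of Lemma \ref{LemmaRough} and Lemma \ref{LemmeFullyWeighted}, both read with ${\bf d}=n$ and indices $(\frac p2,\frac n2)$, so that $p^*=p$ and $q^*=n$. The only differences are cosmetic: you argue directly rather than by contradiction, and you cite Lemma \ref{LemmeFullyWeighted} rather than Lemma \ref{LemmaRough} for the final embedding giving $\|\vu-\vv\|_{\mathcal{M}^{p,n}(\omega)}=0$, which is the more accurate reference.
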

%%%%%%%%%%%%%%%%%%%%%%%%%%%%%%%%%%%%%%%%%%%%%%%%%%% 
\noindent This result is the weighted counterpart of the Theorem \ref{Theorem_Uniqueness} and by separating the information we can also deduce the uniqueness of solutions (related to the same external force) outside the scope of the Theorem \ref{Theorem_Existence}. Indeed, if $\vu \in \dot{W}^1_{\frac{p}{2},\frac{n}{2}}\cap \dot{W}^1_{\frac{p}{2},\frac{n}{2}}(\omega)$ is a (small) solution obtained via the Theorem \ref{Theorem_Existence} and if $\vv$ is also a solution of the equation (\ref{Equation_Intro}) with $\vv\in \dot{W}^1_{\frac{p}{2},\frac{n}{2}}\cap \dot{W}^1_{\frac{p}{2},\frac{n}{2}}(\omega)$ but where $\|\vv\|_{\dot{W}^1_{\frac{p}{2},\frac{n}{2}}}\gg1$ and $\|\vv\|_{\dot{W}^1_{\frac{p}{2},\frac{n}{2}}(\omega)}\ll \epsilon$ (note that since the solution $\vv$ is not small, it can not be obtained via the Theorem \ref{Theorem_Existence}), then by the previous lemma we can still deduce that $\vu\equiv \vv$.\\

To conclude, let us remark that a simple adaptation of the computations performed in the previous lines allows us to deduce the following uniqueness result in the setting of the fully weighted existence result presented in the Theorem \ref{Theo_FullyWeighted} above: 
%%%%%%%%%%%%%%%%%%%%%%%%%%%%%%%%%%%%%%%%% %%%%%%%%%
\begin{Lemme}
Consider $\vu, \vv\in\dot{W}^1_{\frac{p}{2},\frac{\bf d}{2}}(\omega)$ be two solutions associated to the same external force $\vf\in \dot{W}^{-1}_{\frac{p}{2}, \frac{\bf d}{2}}(\omega)$ of the equation (\ref{Equation_Intro}) where $\omega\in A_{\frac{p}{2}}$ with $2<p< n$ is a Muckenhoupt weight that satisfies the ${\bf d}$-lower condition (\ref{Ahlfors}). If $\|\vu\|_{\dot{W}^{1}_{\frac{p}{2}, \frac{\bf d}{2}}(\omega)}+\|\vv\|_{\dot{W}^{1}_{\frac{p}{2}, \frac{\bf d}{2}}(\omega)}$ is small enough, then we have $\vu=\vv$.
\end{Lemme}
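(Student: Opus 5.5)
The argument will follow the proof of Theorem \ref{Theorem_Uniqueness} and of the preceding weighted lemma, with the resolution space replaced by $\dot{W}^1_{\frac{p}{2},\frac{\bf d}{2}}(\omega)$ as in the proof of Theorem \ref{Theo_FullyWeighted}. Let $\vu,\vv\in\dot{W}^1_{\frac{p}{2},\frac{\bf d}{2}}(\omega)$ be two solutions with the same force $\vf$, and suppose $\vu\neq\vv$. Applying the Leray projector to both equations and subtracting cancels the force term $(-\Delta)^{-1}\mathbb{P}(\vf)$. By bilinearity we obtain the difference formula (\ref{FormuleBilineaireUnicite}), which writes $\vu-\vv$ as a sum of two terms of the form $(-\Delta)^{-1}\mathbb{P}(div(\cdot\otimes\cdot))$. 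In each term, one factor is the difference $\vu-\vv$ (inside $\mathcal{T}$ or not).

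We take the $\dot{W}^1_{\frac{p}{2},\frac{\bf d}{2}}(\omega)$ norm of both sides. Since $\omega\in A_{\frac{p}{2}}$ with $p>2$, the Leray projector and the Riesz transforms are bounded on $\mathcal{M}^{\frac{p}{2},\frac{\bf d}{2}}(\omega)$. This reduces the operator $(-\Delta)^{\frac12}(-\Delta)^{-1}\mathbb{P}\,div$ to the identity up to a constant, so
$$\|\vu-\vv\|_{\dot{W}^1_{\frac{p}{2},\frac{\bf d}{2}}(\omega)}\leq C\|\mathcal{T}(\vu-\vv)\otimes\vu\|_{\mathcal{M}^{\frac{p}{2},\frac{\bf d}{2}}(\omega)}+C\|\mathcal{T}(\vv)\otimes(\vv-\vu)\|_{\mathcal{M}^{\frac{p}{2},\frac{\bf d}{2}}(\omega)}.$$
The weighted Hölder inequality (\ref{Holder}) with $\frac{2}{p}=\frac1p+\frac1p$ and $\frac{2}{\bf d}=\frac1{\bf d}+\frac1{\bf d}$ bounds each product by a product of $\mathcal{M}^{p,{\bf d}}(\omega)$ norms.

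Next we apply the third point of Lemma \ref{LemmaRough} and Lemma \ref{LemmeFullyWeighted}, both with $\alpha=1$, $q=\frac{\bf d}{2}$, $p^*=p$ and $q^*={\bf d}$. These give $\|\mathcal{T}(\vec\phi)\|_{\mathcal{M}^{p,{\bf d}}(\omega)}+\|\vec\phi\|_{\mathcal{M}^{p,{\bf d}}(\omega)}\leq C\|\vec\phi\|_{\dot{W}^1_{\frac{p}{2},\frac{\bf d}{2}}(\omega)}$. We obtain
$$\|\vu-\vv\|_{\dot{W}^1_{\frac{p}{2},\frac{\bf d}{2}}(\omega)}\leq C\big(\|\vu\|_{\dot{W}^1_{\frac{p}{2},\frac{\bf d}{2}}(\omega)}+\|\vv\|_{\dot{W}^1_{\frac{p}{2},\frac{\bf d}{2}}(\omega)}\big)\|\vu-\vv\|_{\dot{W}^1_{\frac{p}{2},\frac{\bf d}{2}}(\omega)}.$$
If the bracket is smaller than $1/C$, this forces $\|\vu-\vv\|_{\dot{W}^1_{\frac{p}{2},\frac{\bf d}{2}}(\omega)}=0$. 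Lemma \ref{LemmeFullyWeighted} then gives $\|\vu-\vv\|_{\mathcal{M}^{p,{\bf d}}(\omega)}=0$, hence $\vu=\vv$ almost everywhere.

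The main obstacle is checking that the parameter conditions of the two embeddings hold, as they did in the proof of Theorem \ref{Theo_FullyWeighted}. Lemma \ref{LemmeFullyWeighted} needs $\alpha q=\frac{\bf d}{2}<{\bf d}$, which is automatic. It also needs $\frac p2\leq\frac{\bf d}{2}$, i.e. $p\leq{\bf d}$, which I would add as an implicit assumption, as in Theorem \ref{Theo_FullyWeighted}. Point 3 of Lemma \ref{LemmaRough} requires $\frac{\rho n}{\rho n+\rho-n}<\frac p2$, taken as part of the admissible range.

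The Muckenhoupt hypothesis also has to be checked. Both lemmas are stated for $\omega\in A_{p/2}$ at integrability $\frac p2$, which is exactly the hypothesis $\omega\in A_{\frac p2}$ we have. The statement also assumes $p<n$, which is harmless. Since these are the same choices already made in Section \ref{SeccTheo_FullyWeighted}, I expect no new difficulty beyond this bookkeeping.
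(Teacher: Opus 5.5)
Your proposal is correct and is exactly the ``simple adaptation'' the paper has in mind. It runs the same chain as Section \ref{SeccTheo_FullyWeighted}: the difference formula (\ref{FormuleBilineaireUnicite}), then the $\dot{W}^1_{\frac{p}{2},\frac{\bf d}{2}}(\omega)$ norm with boundedness of $\mathbb{P}$ and the Riesz transforms, then weighted H\"older, then point 3 of Lemma \ref{LemmaRough} and Lemma \ref{LemmeFullyWeighted} with $q=\frac{\bf d}{2}$, $p^*=p$, $q^*={\bf d}$. The side conditions you flag are the same ones the paper leaves implicit there: $p\leq{\bf d}$, which is in fact built into the definition of $\mathcal{M}^{\frac{p}{2},\frac{\bf d}{2}}(\omega)$, and the $\rho$-range required for $\mathcal{T}$.
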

%%%%%%%%%%%%%%%%%%%%%%%%%%%%%%%%%%%%%%%%% %%%%%%%%%
\noindent {\bf Acknowledgment.} This work was supported by the GDRI ECO-Math.\\

\noindent {\bf Conflict of interest.} We declare that we do not have any commercial or associative interest that represents a conflict of interest in connection with the work submitted.

%%%%%%%%%%%%%%%%%%%%%%%%%%%%%%%%%%%%%%%%%%%%%%%%%%%%%%%%%%%%%%%%%%%%%%%%%%%%%%%%%%%%%%%%%%%%%%%%%%%%%%%%%%%%%%%%%%%%%%%%%%%%%%%%%%%%%%%%%%%%%%%%%%%%%%%%%%%%%%%%%%%%%%%%%%%%%%%%%%%%%%%%%%%%%%%%%%%%%%%%%%%%
\end{document}